\documentclass[a4paper, 12pt]{article}
        \usepackage{amsmath, amssymb, amsfonts, latexsym, dsfont, enumerate}
        \usepackage{amsthm}
        \usepackage[latin1]{inputenc}
        \usepackage[all]{xy}
        \usepackage{algorithmic, algorithm}
        \usepackage[normalem]{ulem}

\pagestyle{plain}
\oddsidemargin=0cm
\evensidemargin=0cm
\topmargin=-2cm
\textwidth=16cm
\textheight=25cm
\mathsurround=1.5pt
\parskip=2pt

\DeclareMathOperator{\id}{id}
\DeclareMathOperator{\Mor}{Mor}
\DeclareMathOperator{\Sur}{Sur}
\DeclareMathOperator{\Hom}{Hom}

\DeclareMathOperator{\Image}{Im}
\DeclareMathOperator{\Ch}{Ch}
\DeclareMathOperator{\cross}{cr}
\DeclareMathOperator{\Sym}{Sym}

\DeclareMathOperator{\dev}{\intercal}
\DeclareMathOperator{\module}{\textrm{-}\bf{mod}}


\newtheorem{thm}{Theorem}[section]
\newtheorem{Lemma}[thm]{Lemma}
\newtheorem{cor}[thm]{Corollary}
\newtheorem{prop}[thm]{Proposition}

\theoremstyle{definition}
\newtheorem{defn}[thm]{Definition}
\newtheorem{note}[thm]{Notation}

\theoremstyle{remark}

\newtheorem{example}[thm]{Example}

\newtheorem{proc}[thm]{Procedure}

\algsetup{indent=2em}

\begin{document}

\title{An algorithmic approach to Dold-Puppe complexes}
\author{Ramesh Satkurunath and Bernhard K\"ock}
\date{\today}

\maketitle

\begin{quote}
{\footnotesize {\bf Abstract.} A Dold-Puppe complex is the image
$NF\Gamma(C.)$ of a chain complex $C.$ under the composition of
the functors $\Gamma$, $F$ and $N$ where $\Gamma$ and $N$ are
given by the Dold-Kan correspondence and $F$ is a not-necessarily
linear functor between two abelian categories. The first half of
this paper gives an algorithm that streamlines the calculation of
$\Gamma(C.)$. The second half gives an algorithm that allows the
explicit calculation of the Dold-Puppe complex $NF\Gamma(C.)$ in
terms of the cross-effect functors of $F$.

{\bf Mathematics Subject Classification 2000.} 13D25; 18G10; 18G30.}

\end{quote}

\section*{Introduction}

Let \(R\) and \(S\) be rings.  The construction of the left
derived functors \(L_k F:R\module\rightarrow S\module\) of any
covariant right-exact functor \(F:R\module\rightarrow S\module\)
is achieved by applying three functors. The first functor
constructs a projective resolution \(P.\) of the \(R\)-module
\(M\) that we wish to calculate the derived functor of. Then the
functor \(F\) is applied to the resolution \(P.\) giving the chain
complex \(F(P.)\).  Lastly \(L_k F(M)\) is defined to be
\(H_k(F(P.))\), the \(k^{th}\) homology of the chain complex
\(F(P.)\). However for a given module \(M\) the projective
resolution of \(M\) is unique only up to chain-homotopy
equivalence, so this construction crucially depends on the fact
that \(F\) preserves chain-homotopies. In general this fact does
not hold when \(F\) is a nonlinear functor such as the \(l^{th}\)
symmetric power functor, \(\Sym^{l}\), or the \(l^{th}\) exterior
power functor, \(\Lambda^l\).  In the paper \cite{DP} Dold and
Puppe overcome this problem and define the derived functors of
non-linear functors by passing to the category of simplicial
complexes using the Dold-Kan correspondence.

The Dold-Kan correspondence gives a pair of functors \(\Gamma\) and \(N\)
that provide an equivalence between the category of bounded chain
complexes and the category of simplicial complexes;
under this correspondence chain homotopies correspond to simplicial homotopies.
Furthermore in the simplicial world all functors preserve simplicial homotopy (not just linear functors).
Because of this the above definition of the derived functors of \(F\) becomes well defined
for any functor when \(F(P.)\) is replaced by the complex \(NF\Gamma(P.)\).
We call chain complexes of the form \(NF\Gamma(C.)\) Dold-Puppe complexes,
for any bounded chain complex \(C.\).

Let \(R\) be a ring and let \(I\) be an ideal in \(R\) that is
locally generated by a non-zero divisor. If \(P.\) is a length-one
\(R\)-projective resolution of a projective \(R/I\)-module \(V\)
then the homology of the Dold-Puppe complex \(N\Sym^k\Gamma(P.)\),
\(k\ge 1\), has been explicitly computed in \cite{Ko1}. These
computations yield a very natural and new proof of the classical
Adams-Riemann-Roch theorem for regular closed immersions and hence
a new approach to the seminal Grothendieck-Riemann-Roch theorem
avoiding the comparatively involved deformation to the normal
cone, see \cite{Ko1}.

If \(C.\) is a chain complex of length bigger than~1 then the
calculation of the Dold-Puppe complex \(NF\Gamma(C.)\) is normally
too complicated to be performed on a couple of pieces of paper,
and the nature of the calculation means that errors easily creep
in. In this paper we analyse and elucidate its combinatorial
structure, and exploiting this structure that we have revealed we
develop an algorithm that computes this Dold-Puppe complex. We
hope that this explicit description of the Dold-Puppe complex will
help later work in calculating its homology, particularly in
concrete example situations. Moreover we expect that it will be
useful in computing maps between the homology of different
Dold-Puppe complexes, such as the plus and diagonal maps occurring
in \cite{Ko1}: for such calculations one often has to find
representatives on the complex level for elements of the homology.

We now describe the contents of each section in more detail.

In Section \ref{fdinDelta} we introduce an ordering on the set
\(\Mor([n],[k])\) of order-preserving maps between
\([n]:=\{0<1<\ldots<n\}\) and \([k]:=\{0<1<\ldots<k\}\) (see
Definition~\ref{order}). Basically the entire paper is based on
this crucial definition. We show at the end of Section~1 that
composition with the face maps \(\delta_i:[n-1]\rightarrow[n]\)
and degeneracy maps \(\sigma_i:[n]\rightarrow[n-1]\) is
``well-behaved" with respect to this ordering (see
Theorem~\ref{well behaved}).

The simplicial complex \(\Gamma(C.)\) is defined by
\[\Gamma(C.)_n=\bigoplus^{n}_{k=0}\bigoplus_{\mu\in\Sur([n],[k])}C_k\text{,}\]
so we have a copy of the direct summand \(C_k\) for each
surjective order-preserving map \(\mu:[n]\rightarrow[k]\). The
face and degeneracy operators in the simplicial complex
\(\Gamma(C.)\) are defined in terms of composition of \(\mu\) with
the maps \(\delta_i\) and \(\sigma_i\). In Section \ref{fdinGamma}
we  show how the results in Section \ref{fdinDelta} can be used to
streamline the calculation of the face and degeneracy operators in
the simplicial complex \(\Gamma(C.)\) (see
Theorem~\ref{bigGammathm} and Example~\ref{face and degeneracy
operator calculation example}).

In Section \ref{crosstheory} we summarise the results on
cross-effect functors that are needed for the final section.

The Dold-Puppe complex \(NF\Gamma(C.)\) is constructed by modding
out the images of the degeneracy operators in \(F\Gamma(C.)\). To
calculate this we  apply the theory of cross-effect functors to
decompose both the numerator and denominator into the direct sum
of cross-effect modules, the non-degenerate modules corresponding
to the terms that appear in the numerator but not in the
denominator. However the decomposition produces many, many terms
and seeing which are non-degenerate is far from obvious. In
Section \ref{honourabilitynstuff} we give a criterion that
identifies the non-degenerate terms (see Proposition~\ref{hon not
degen}).  Using the ordering we introduced in Section~1 we later
give an algorithm that constructs all relevant non-degenerate
terms, thus avoiding the need to check each of the many terms one
by one. We finally illustrate the methods developed in this paper
in the case when $C.$ is a chain complex of modules over a
commutative ring of length~2 and $F$ is the symmetric-square
functor (see Example~\ref{final}).

\section*{Notations}

Let \( \Delta \) be the category whose objects are the non-empty
finite totally ordered sets \( {[n] := \{0 < 1 < ... < n \}}\),
\(n \in \mathbb{N}\), and the set of morphisms, \(\Mor([n],[k])\),
between \([n]\) and \([k]\) consists of all the order-preserving
maps between them. Recall that for each \(i\in \{0,\dots,n\}\) the
face map \( \delta_i : [n-1] \rightarrow [n] \) is the unique
injective order-preserving map with \( \delta_i^{-1}(i) =
\emptyset,\) and for each \(i\in \{0,\dots,n-1\}\) the degeneracy
map \( \sigma_i: [n] \rightarrow [n-1]\) is the unique
order-preserving surjective map with \( \sigma_i^{-1}(i) = \{ i,
i+1 \} \). For a category~\( \mathcal{A} \), a simplicial object
\( A \) in \( \mathcal{A} \) is a contravariant functor \( A:
\Delta \rightarrow \mathcal{A} \).  We write \( A_n \) for \(
A([n]) \), \(d_i\) for the \emph{face operator} \( A(\delta_i)
:A_n \rightarrow A_{n-1} \), \( s_i \) for the \emph{degeneracy
operator} \( A(\sigma_i) :A_{n-1} \rightarrow A_n \) and
\(\Sur([n],[k])\) for the set of surjective morphisms between
\([n]\) and \([k]\).

\section{Partitions and composition with face/degeneracy maps in \(\Delta\)}\label{fdinDelta}

For the whole of this section let us fix the natural numbers \(n\)
and \(k\). In this section we introduce an ordering on
\(\Mor([n],[k])\), investigate the maps \(\mu\mapsto \mu\delta_i\)
and \(\nu\mapsto \nu\sigma_i\) between \(\Mor([n],[k])\) and
\(\Mor([n-1],[k])\) and show that these maps behave in a nice way
with respect to the introduced ordering.

This ordering will be used throughout this paper. In Section
\ref{fdinGamma} it will allow us to describe algorithms that
streamline the calculation of the face and degeneracy operators in
the simplicial complex \(\Gamma(C.)\) (for any bounded chain
complex $C.$). In Section 4 the ordering will help us to give an
algorithmic description of the Dold-Puppe
complex~\(NF\Gamma(C.)\).

\begin{defn}

For an \(n\)-tuple \( x:=(x_1, \ldots, x_n) \in \mathbb{N}^n \)
we write \(|x|\) for the sum \(\sum_{l=1}^n x_l,\) and
we call \(x\) \emph{a partition of \(m\) of length \(n\)} if \(|x|= m\).
If each \(x_i \not= 0 \) we call \(x\) a \emph{proper partition}, otherwise we call
\(x\) an \emph{improper partition}.  We write \(x_i\) for the \(i^{\textrm{th}}\) entry of \(x\).
\end{defn}

A function \(\mu:[n]\rightarrow[k]\) is determined by \(\mu^{-1}(0)\), \(\mu^{-1}(1),\;
\ldots,\; \mu^{-1}(k)\).  If \(\mu\) is a monotonically increasing function then the sets \(\mu^{-1}(0)\),
\(\mu^{-1}(1),\;\ldots,\; \mu^{-1}(k)\) consist of consecutive elements of \([n]\). Because of
this it is sufficient to know the sizes of these sets.
Hence we can think of a morphism \(\mu:[n]\rightarrow[k]\)
as a partition of \(n+1\) of length \(k+1\).
A surjective morphism would correspond to a
proper partition and a non-surjective morphism would correspond to
an improper partition.

\begin{note}\label{*def}
Let \(\mu\in\Mor([n],[k]).\) We write \(\mu^*\) for the partition
\((|\mu^{-1}(0)|,\ldots,|\mu^{-1}(n)|)\). Note that \(\mu^*_i =
|\mu^{-1}(i-1)|\).
\end{note}

\begin{Lemma}\label{sizesur} The cardinality of the set of
surjective order-preserving morphisms between between the sets
$[n]$ and $[k]$ is given by the binomial coefficient
$\binom{n}{k}:$
\[
|\Sur([n],[k])|= \binom{n}{k}\text{.}
\]
\end{Lemma}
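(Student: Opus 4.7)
The plan is to use the bijection, already sketched in the paragraph preceding Notation~\ref{*def}, between morphisms $\mu\in\Mor([n],[k])$ and partitions of $n+1$ of length $k+1$, under which surjective morphisms correspond to proper partitions (tuples $(x_1,\ldots,x_{k+1})\in\mathbb{N}^{k+1}$ with every $x_i\ge 1$ and $\sum_i x_i=n+1$). Counting $|\Sur([n],[k])|$ thus reduces to counting compositions of $n+1$ into $k+1$ strictly positive parts.

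The count is a standard stars-and-bars argument: writing $n+1$ as $\underbrace{1+1+\cdots+1}_{n+1\text{ ones}}$, a proper composition of length $k+1$ is obtained by choosing $k$ of the $n$ internal gaps between consecutive ones at which to place a divider. Hence the number of such compositions is $\binom{n}{k}$.

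Equivalently, one can phrase the bijection directly in terms of $\mu$: since $\mu$ is order-preserving and surjective onto $[k]$, it is determined by the set of its $k$ "jump positions" $\{\,i\in\{0,1,\ldots,n-1\}:\mu(i)<\mu(i+1)\,\}$, and any $k$-element subset of $\{0,1,\ldots,n-1\}$ arises in this way from a unique surjective order-preserving $\mu$; there are $\binom{n}{k}$ such subsets.

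There is no real obstacle here: the statement is essentially the observation that $\Sur([n],[k])$ is in natural bijection with the set of $k$-subsets of an $n$-element set. The only thing to be slightly careful about is the off-by-one arising from the convention $[n]=\{0,1,\ldots,n\}$, which is why the answer is $\binom{n}{k}$ rather than $\binom{n+1}{k+1}$ or $\binom{n-1}{k-1}$; the conversion between proper partitions of $n+1$ of length $k+1$ and $k$-subsets of an $n$-element set already accounts for this.
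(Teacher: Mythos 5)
Your proof is correct and essentially matches the paper's: the paper also determines a surjection by choosing $k$ distinguished elements of an $n$-element set (the minimal elements of $\mu^{-1}(1),\ldots,\mu^{-1}(k)$, taken from $\{1,\ldots,n\}$), which is the same bijection as your "jump positions" shifted by one, and equivalent to your stars-and-bars count of proper compositions.
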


\begin{proof}
If \(\mu:[n]\rightarrow[k]\) is a surjective morphism then the sets
\(\mu^{-1}(0)\), \(\mu^{-1}(1),\; \ldots,\; \mu^{-1}(k)\) are non-empty, disjoint, their union is \([n]\)
and they consist of consecutive elements of \([n]\).  So if we know the smallest elements
of \(\mu^{-1}(1)\), \(\mu^{-1}(2),\; \ldots,\; \mu^{-1}(k)\) then we have determined \(\mu\).
Since we know \(0 = \mu(0)\) the smallest elements are in the set \(\{1,...,n\}\).
So there are as many elements of \(\Sur([n],[k])\) as there are ways of
choosing \(k\) elements from a set of size \(n\).
\end{proof}

\begin{note}\label{inducedfns}
For \(i\in \{0,\ldots,n\}\) define \(\overline{\delta}_i :
\Mor([n],[k])\rightarrow \Mor([n-1],[k])\) by \(\mu \mapsto
\mu\delta_i\), and for \(i\in \{0,\ldots,n-1\}\) define
\(\overline{\sigma}_i:\Mor([n-1],[k])\rightarrow \Mor([n],[k])\)
by \(\nu \mapsto \nu\sigma_i\).  By abuse of notation we write
\(\Image \overline{\sigma}_i\) for
\(\overline{\sigma}_i(\Sur([n-1],[k]))\).
\end{note}


\begin{Lemma}\label{deltagamma=identity}
For all \(i \in \{0,\ldots,n-1\}\) we have \(\overline\delta_i\overline\sigma_i = \id\), and hence
\(\overline\sigma_i \) is injective and \(\overline\delta_i\) is surjective;
also \(\overline\delta_n\) is surjective.
\end{Lemma}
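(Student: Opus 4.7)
The plan is to break the statement into two independent parts: the identity $\overline\delta_i\overline\sigma_i = \id$ (with its formal consequences) and the surjectivity of $\overline\delta_n$.

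First I would unravel the definitions. For any $\nu \in \Mor([n-1],[k])$ we have
\[\overline\delta_i\overline\sigma_i(\nu) = \overline\delta_i(\nu\sigma_i) = \nu\sigma_i\delta_i,\]
so the claim reduces to the cosimplicial identity $\sigma_i\delta_i = \id_{[n-1]}$ in $\Delta$. This is a quick check on elements of $[n-1]$: by definition $\delta_i$ is the unique order-preserving injection skipping $i$ and $\sigma_i$ is the unique order-preserving surjection collapsing $\{i,i+1\}$ to $i$, so $\sigma_i\delta_i(j) = j$ for every $j\in[n-1]$. Having $\overline\delta_i\overline\sigma_i = \id$ as a set-theoretic identity then immediately implies that $\overline\sigma_i$ admits a left inverse and is therefore injective, and that $\overline\delta_i$ admits a right inverse and is therefore surjective.

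The remaining assertion, surjectivity of $\overline\delta_n$, requires a small construction since there is no degeneracy $\sigma_n$ to appeal to. Given $\nu \in \Mor([n-1],[k])$, I would define $\mu \in \Mor([n],[k])$ by
\[
\mu(j) = \begin{cases} \nu(j) & \text{if } 0 \le j \le n-1, \\ \nu(n-1) & \text{if } j = n. \end{cases}
\]
The map $\mu$ is order-preserving because $\nu$ is and because $\mu(n) = \nu(n-1) = \mu(n-1)$. Since $\delta_n : [n-1]\to[n]$ is the inclusion $j\mapsto j$, we have $\overline\delta_n(\mu) = \mu\delta_n = \nu$, which gives the required preimage.

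I do not expect any genuine obstacle here: the whole argument rests on a standard cosimplicial identity together with an explicit extension at the top index. The only thing one has to be mildly careful about is that $\overline\delta_n$ and $\overline\sigma_{n-1}$ are not equal as constructions (there is no $\overline\sigma_n$), so the last clause really does need its own one-line justification rather than being folded into the first part.
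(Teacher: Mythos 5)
Your proof is correct and takes essentially the same approach as the paper: the first part is the cosimplicial identity $\sigma_i\delta_i=\id$, and your explicitly constructed preimage $\mu$ is exactly $\nu\sigma_{n-1}$, so your last clause is the paper's appeal to $\sigma_{n-1}\delta_n=\id$ written out element by element.
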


\begin{proof} The result follows directly from
\(\sigma_i\delta_i = \id\) for \(i \in \{0,\ldots,n-1\}\) and
from \(\sigma_{n-1}\delta_n = \id\text{.}\) \end{proof}

\begin{defn}\label{initial}
Let \(a\) be a partition of length \(k\)
and \(x\) a partition of length \(l\le k\).
Then we call \(x\) an \emph{initial partition of $a$} if
\(x_i = a_i\) for \( 1\le i\le l.\)  We write \(a=(x,y)\)
where \(y\) is the partition of length \(k-l\) defined by \(y_i = a_{i+l}\) for \(1 \le i \le k-l\).
(Note we may allow either \(x\) or \(y\) to be the empty partition.)
\end{defn}

Since knowing the effects of \(\overline{\delta}_i\) and
\(\overline{\sigma}_i \) are essential in calculating \(d_i\) and
\(s_i\) it is useful to have a quick way of working out the
partitions \((\mu\delta_i)^{*}\) and \((\mu\sigma_i)^{*}\) from
the partition~\(\mu^*\).

\begin{Lemma}\label{bareffect}
\begin{enumerate}[(a)]
\item Let \(\mu\in\Mor([n-1],[k])\) and \(i\in\{0,\ldots,n-1\}.\)
We write \(\mu^*=(x,d,y)\) with partitions \(x,y\) and a positive
integer \(d\) such that \(|x|<i+1\le |x|+d.\) Then the partition
\((\mu\sigma_i)^*\) is equal to \((x,d+1,y).\)


\item
Let \(\mu\in\Mor([n],[k])\) and \(i\in\{0,\ldots,n\}.\)
As above we write \(\mu^*=(x,d,y)\) so that
\(|x|<i+1\le |x|+d.\)
Then the partition \((\mu\delta_i)^*\) is equal to \((x,d-1,y).\)
\end{enumerate}
\end{Lemma}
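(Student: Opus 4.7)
The plan is to decode what the hypothesis says about the preimages of $\mu$ and then simply track how composition with $\sigma_i$ (respectively $\delta_i$) alters the sizes of those preimages; reading off the resulting partition of sizes then yields the claim.

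First I would unpack the notation. Writing $l$ for the length of $x$, the formula $\mu^*=(x,d,y)$ says that $|\mu^{-1}(j)|=x_{j+1}$ for $0\le j<l$, that $|\mu^{-1}(l)|=d$, and that $|\mu^{-1}(l+j)|=y_j$ for $j\ge 1$. Because $\mu$ is order-preserving, each preimage $\mu^{-1}(j)$ is a (possibly empty) interval of consecutive integers, and concretely $\mu^{-1}(l)=\{|x|,|x|+1,\ldots,|x|+d-1\}$. The hypothesis $|x|<i+1\le |x|+d$ is therefore equivalent to $|x|\le i\le |x|+d-1$, which says precisely that $i\in\mu^{-1}(l)$.

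For part (a) I would then compute $(\mu\sigma_i)^{-1}(j)=\sigma_i^{-1}(\mu^{-1}(j))$ for each $j$, using the explicit description $\sigma_i^{-1}(t)=\{t\}$ for $t<i$, $\sigma_i^{-1}(i)=\{i,i+1\}$ and $\sigma_i^{-1}(t)=\{t+1\}$ for $t>i$. Blocks $\mu^{-1}(j)$ with $j<l$ sit in $\{0,\ldots,i-1\}$ on which $\sigma_i^{-1}$ is the identity, so their sizes $x_{j+1}$ are unchanged; the block $\mu^{-1}(l)$ contains $i$, so its preimage gains the extra element $i+1$ and has size $d+1$; the blocks with $j>l$ sit in $\{i+1,i+2,\ldots\}$ and are merely shifted up by one, keeping their sizes. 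Reading off the new sequence of sizes gives $(\mu\sigma_i)^*=(x,d+1,y)$. Part (b) is completely dual: using $\delta_i^{-1}(t)=\{t\}$ for $t<i$, $\delta_i^{-1}(i)=\emptyset$ and $\delta_i^{-1}(t)=\{t-1\}$ for $t>i$, the same case analysis shows that only the block $\mu^{-1}(l)$ changes size: it loses the element $i$ and therefore shrinks to $d-1$, all other blocks retaining their sizes, so that $(\mu\delta_i)^*=(x,d-1,y)$.

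There is no real obstacle here; the proof is entirely bookkeeping. The main thing to be careful about is that $x$ or $y$ may be empty and that $d-1$ may equal $0$, but since the analysis for each block is local this causes no trouble — the assumption that $d$ is a positive integer with $|x|<i+1\le|x|+d$ guarantees that the decomposition is well defined both before and after composition.
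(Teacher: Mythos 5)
Your proof is correct and follows essentially the same route as the paper: both reduce the lemma to tracking how composition with $\sigma_i$ (resp.\ $\delta_i$) changes the sizes of the preimage intervals $\mu^{-1}(j)$, using the hypothesis $|x|<i+1\le|x|+d$ to identify the unique block containing $i$. The paper states the size change slightly more abstractly (via ``$|(\mu\sigma_i)^{-1}(l-1)|$ increments iff $i\in\mu^{-1}(l-1)$'') rather than spelling out $\sigma_i^{-1}$ and $\delta_i^{-1}$ pointwise, but the content is the same.
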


\begin{proof}
It is clear that we can write $\mu^*$ in the stated way. Note that
$d\not= 0$, so $d-1$ is non-negative.

By definition for every \(\mu\) in \(\Mor([n],[k])\) we have
\(\mu_l^* = |\mu^{-1}(l-1)|\) and we also have
\((\mu\sigma_i)^{-1}(l-1) = \sigma_i^{-1} \mu^{-1}(l-1).\)
Recalling \(\sigma_i\) is the unique surjective map
\([n-1]\rightarrow[n]\) with \(\sigma_i^{-1}(i) = \{i, i+1\} \) we
see \(|(\mu\sigma_i)^{-1}(l-1)| = |\mu^{-1}(l-1)|\) if and only if
\(i \notin \mu^{-1}(l-1),\) and \(|(\mu\sigma_i)^{-1}(l-1)| =
|\mu^{-1}(l-1)| +1\) if and only if \(i \in \mu^{-1}(l-1);\) i.e.\
\(\mu^*_l=(\mu\sigma_i)^*_l\) if and only \(i \notin
\mu^{-1}(l-1)\), and \((\mu\sigma_i)^*_l = \mu^*_l +1\) if and
only if \(i \in \mu^{-1}(l-1).\)

Let $L$ be the length of $x.$
Remembering that \(i\) is the \((i+1)^{\textrm{th}}\) element of \([n]\)
we find that \(i \in \mu^{-1}(L)\) and so, by the last sentence of the previous paragraph,
we find \((\mu\sigma_i)^*=(x,d+1,y).\)

We similarly get our result for \(\delta_i.\)
\end{proof}

%
%

%

\begin{Lemma}\label{nonsur}
Let \(\mu\in\Sur([n],[k]),\) and let \(i\in\{0,\ldots,n\}.\) Then
the morphism \(\overline{\delta}_i(\mu)=\mu\delta_i\) is not
surjective if and only if the partition $\mu^*$ is of the form
\((x,1,y),\) where \(x\) is a partition of \(i\). In this case we
have the commutative diagram
\[
\xymatrix{
                                            &[n]\ar[dr]^{\mu}       &\\
[n-1]\ar[ur]^{\delta_i}\ar[dr]^{\hat\mu}    &                       &[k]\\
                                            &[k-1]\ar[ur]^{\delta_j}&\\
}
\]
where $\hat\mu$ is the surjection with \(\hat\mu^*=(x,y)\) and $j$
is the length of $x$; in particular $i=0$ if and only if $j=0$.
\end{Lemma}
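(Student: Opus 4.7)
The plan is to reduce everything to the partition calculus from Lemma~\ref{bareffect}(b), since both the surjectivity question and the factorisation can be read off from the partition \((\mu\delta_i)^*\).

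First, I would write \(\mu^* = (x, d, y)\) with \(|x| < i+1 \le |x| + d\), so that Lemma~\ref{bareffect}(b) gives \((\mu\delta_i)^* = (x, d-1, y)\). Because \(\mu\) is surjective, every entry of \(\mu^*\) is strictly positive; in particular \(d \ge 1\), and both \(x\) and \(y\) are themselves proper partitions. Hence \((\mu\delta_i)^*\) is improper, equivalently \(\mu\delta_i\) fails to be surjective, precisely when \(d - 1 = 0\), i.e.\ \(d = 1\). In this case the constraint \(|x| < i+1 \le |x| + 1\) forces \(i = |x|\), so \(x\) is indeed a partition of \(i\).

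For the factorisation, assume \(d=1\) and let \(j\) be the length of \(x\). Since \(\mu^*\) has length \(k+1 = j + 1 + \text{length}(y)\) and sum \(n+1\), the pair \((x,y)\) is a proper partition of \(n\) of length \(k\); let \(\hat\mu \in \Sur([n-1],[k-1])\) be the unique surjection with \(\hat\mu^* = (x,y)\). To verify commutativity I would compute \((\delta_j\hat\mu)^*\) directly using \((\delta_j\hat\mu)^{-1}(l-1) = \hat\mu^{-1}(\delta_j^{-1}(l-1))\): for \(l \le j\) this gives \(x_l\); for \(l = j+1\) it is \(0\) since \(j \notin \Image\delta_j\); and for \(l \ge j+2\) it gives \(y_{l-j-1}\). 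Thus \((\delta_j\hat\mu)^* = (x, 0, y) = (\mu\delta_i)^*\), and since a morphism in \(\Delta\) is determined by its partition (the remark preceding Notation~\ref{*def}), \(\delta_j\hat\mu = \mu\delta_i\).

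The final clause is immediate from the surjectivity of \(\mu\): all entries of \(x\) are positive, so \(i = |x| = 0\) if and only if \(x\) is empty, if and only if \(j = 0\). The only real bookkeeping is the three-case computation of \((\delta_j\hat\mu)^*\); everything else is a direct translation between surjectivity of a morphism and properness of its partition, together with the single fact \(d = 1\) from step one.
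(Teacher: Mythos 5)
Your proof is correct and takes essentially the same route as the paper, which simply asserts that the equivalence "follows directly from Lemma~\ref{bareffect}(b)" and that "the additional statements are easy to check"; you have supplied the details the authors omit, including the explicit three-case computation of $(\delta_j\hat\mu)^*$ showing $(\delta_j\hat\mu)^*=(x,0,y)=(\mu\delta_i)^*$.
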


\begin{proof}
The equivalence follows directly from Lemma~\ref{bareffect}(b).
The additional statements are easy to check.
\end{proof}

If \(a\) and \(b\) are both partitions of the same number
over the same number of places and \(x\) is an initial partition of both
them we call \(x\) a \emph{common initial partition of a and b}.
Because \(a\) and \(b\) are of finite length there must be some longest common
initial partition (even if it is of length 0, or it is equal to \(a\)).

\begin{defn}\label{order}
If \(x\) is the longest common initial partition of \(a=(x,y)\)
and \(b=(x,z)\) then we say \(a<b\) if and only if \(y_1<z_1\).
This gives the lexicographic ordering on the set of partitions and
finally, via the bijection $\mu \mapsto \mu^*$, a total  order on
\(\Mor([n],[k])\).
\end{defn}

%
%
%
%

\begin{note}
For \(i\in \{0\ldots,n\}\) let
$$S^{n,k}_i := \{ \mu \in
\Sur([n],[k]) \,|\, \mu^* \textrm{ is of the form } (x,y) \textrm{
where } |x|=i+1\}$$ and let
$$\widetilde{S^{n,k}_i}
:=\{\mu\in\Sur([n],[k])\,|\,\mu^* \textrm{ is of the form }
(x,1,y) \textrm{ where } |x|=i\}.$$
\end{note}
Note that \(\widetilde{S^{n,k}_i}\subset S^{n,k}_i\) and
Lemma~\ref{nonsur} tells us that the set \(\widetilde{S^{n,k}_i}\)
coincides with the set
\(\{\mu\in\Sur([n],[k])\,|\,\overline{\delta}_i(\mu) \text{ is not
a surjection}\}.\)

\begin{Lemma}\label{sizesin}
For each \(i\in\{0,\ldots,n-1\}\) we have \(|S^{n,k}_i| =
\binom{n-1}{k-1}\). Furthermore for each \(i\in\{1,\ldots,n-1\}\)
we have \(|\widetilde{S^{n,k}_i}|=\binom{n-2}{k-2}\) and finally
\(|\widetilde{S^{n,k}_n}|=\binom{n-1}{k-1}\).
\end{Lemma}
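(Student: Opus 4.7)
My plan is to reduce each of the three cardinalities to Lemma~\ref{sizesur} via direct bijective arguments that exploit the $*$-partition description of a surjection together with the block structure that the preceding lemmas have already unpacked.

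For $|S^{n,k}_i|$ with $i\le n-1$, I would first observe that $\mu\in S^{n,k}_i$ exactly when $i+1$ is a partial sum of $\mu^*$, equivalently when there is a block boundary of $\mu$ at position $i+1$, equivalently when $\mu(i)<\mu(i+1)$. Its complement inside $\Sur([n],[k])$ is therefore $\{\mu:\mu(i)=\mu(i+1)\}$, which is precisely the image of $\Sur([n-1],[k])$ under $\overline{\sigma}_i$. Since $\overline{\sigma}_i$ is injective by Lemma~\ref{deltagamma=identity} and sends surjections to surjections (post-composing with the surjection $\sigma_i$ preserves the image), Lemma~\ref{sizesur} then yields
\[
|S^{n,k}_i| \;=\; \binom{n}{k} - \binom{n-1}{k} \;=\; \binom{n-1}{k-1}.
\]

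For $|\widetilde{S^{n,k}_i}|$ with $i\in\{1,\ldots,n-1\}$, the plan is to invoke Lemma~\ref{nonsur}: the assignment $\mu\mapsto\hat{\mu}$, where $\mu^*=(x,1,y)$ with $|x|=i$ and $\hat{\mu}^*=(x,y)$, defines a map $\widetilde{S^{n,k}_i}\to\Sur([n-1],[k-1])$. Conversely, from $\hat{\mu}\in\Sur([n-1],[k-1])$ one reconstructs $\mu$ by inserting a $1$ at the unique position whose prefix of $\hat{\mu}^*$ sums to $i$ (unique when it exists, because $\hat{\mu}^*$ is a proper partition with strictly increasing partial sums). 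Consequently $\widetilde{S^{n,k}_i}$ is in bijection with $\{\hat{\mu}\in\Sur([n-1],[k-1]) : i\text{ is a partial sum of }\hat{\mu}^*\} = S^{n-1,k-1}_{i-1}$, and the first part gives $|\widetilde{S^{n,k}_i}|=\binom{n-2}{k-2}$.

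For $|\widetilde{S^{n,k}_n}|$, the defining condition $\mu^*=(x,1,y)$ with $|x|=n$ forces $|y|=0$ (since $|\mu^*|=n+1$), so $y$ is empty. The same correspondence $\mu\mapsto\hat{\mu}$ then reduces to the bijection $\widetilde{S^{n,k}_n}\to\Sur([n-1],[k-1])$ that strips the terminal $1$ from $\mu^*$ (with inverse appending a $1$), whence $|\widetilde{S^{n,k}_n}|=\binom{n-1}{k-1}$. The main obstacle I anticipate is simply keeping the several conventions and index shifts coherent, in particular verifying the characterization $\mu\in S^{n,k}_i\Leftrightarrow\mu(i)<\mu(i+1)$ from the partition description and checking well-definedness of the inverse maps in the two $\widetilde{S^{n,k}_i}$ cases; once these bijections are secured the counts drop out immediately from Lemma~\ref{sizesur}.
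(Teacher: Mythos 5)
Your argument is correct, but it follows a genuinely different route from the paper. The paper counts each set directly by a "choose the maximal elements" argument: an element of $S^{n,k}_i$ is determined by picking, from the $n-1$ elements of $[n]\setminus\{i,n\}$, the maximal elements of all but one of the preimage blocks (giving $\binom{n-1}{k-1}$), and likewise for $\widetilde{S^{n,k}_i}$ with the $n-2$ elements of $[n]\setminus\{i-1,i,n\}$ (giving $\binom{n-2}{k-2}$), finishing with the observation $\widetilde{S^{n,k}_n}=S^{n,k}_{n-1}$. You instead set up explicit bijections: for the first count you identify $(S^{n,k}_i)^C$ with $\Image\overline{\sigma}_i\cong\Sur([n-1],[k])$, for the second you transport $\widetilde{S^{n,k}_i}$ onto $S^{n-1,k-1}_{i-1}$ via the map $\mu\mapsto\hat\mu$ of Lemma~\ref{nonsur} and then recurse into the first count, and for the third you strip the terminal $1$ to land in $\Sur([n-1],[k-1])$. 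Your approach is tidier in one respect: it proves the equality $\Sur([n],[k]) = S^{n,k}_i\amalg\Image\overline{\sigma}_i$ directly as a by-product, whereas the paper gets this only later (Proposition~\ref{sur=Imgammaunionsin}) by a counting argument that cites Lemma~\ref{sizesin} itself, so your route removes a small logical detour. The paper's approach, in turn, is more self-contained at this point and does not require constructing inverse maps. The one thing you should spell out, beyond the verifications you already flag, is the containment $\{\mu\in\Sur([n],[k]):\mu(i)=\mu(i+1)\}\subseteq\Image\overline{\sigma}_i$: you need to exhibit, for such a $\mu$, the surjection $\nu\in\Sur([n-1],[k])$ with $\nu\sigma_i=\mu$ (defined by $\nu(j)=\mu(j)$ for $j\le i$ and $\nu(j)=\mu(j+1)$ for $j\ge i$, consistent precisely because $\mu(i)=\mu(i+1)$), since the paper establishes this equality only via the counting argument you are replacing.
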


Note in the statement above, if the lower entry of a binomial
coefficient is negative, then the binomial coefficient is meant to
be $0$.

\begin{proof}
If \(\mu \in S^{n,k}_i\) then for some \(l\) we have \(i\) is the
maximal element of \(\mu^{-1}(l)\), furthermore we know that \(n\)
is the maximal element of \(\mu^{-1}(k)\).  Therefore choosing an
element \(\mu\) of \(S^{n,k}_i\) amounts to the same as choosing
the maximal elements for all but one of the sets
\(\mu^{-1}(0),\ldots,\mu^{-1}(k-1)\) from the \(n-1\) elements of
\([n]\setminus \{i,n\}\). Hence \(|S^{n,k}_i| =
\binom{n-1}{k-1}\).

For \(i\in\{1,\ldots,n-1\}\) if \(\mu \in \widetilde{S^{n,k}_i}\)
then for some \(l\) we have \(i-1\) is the maximal element of
\(\mu^{-1}(l),\) and also \(i\) is the maximal element of
\(\mu^{-1}(l+1)\), i.e.\ choosing an element \(\mu\) of
\(\widetilde{S^{n,k}_i}\) amounts to the same as choosing the
maximal elements for all but two of the sets
\(\mu^{-1}(0),\ldots,\mu^{-1}(k-1)\) from the \(n-2\) elements of
\([n]\setminus\{i-1,i,n\}\). Hence \(|\widetilde{S^{n,k}_i}| =
\binom{n-2}{k-2}\).

For the last statement we merely observe that
\(\widetilde{S^{n,k}_n}=S^{n,k}_{n-1}\) and use the first result.
\end{proof}

\begin{prop}\label{sur=Imgammaunionsin}
For each \(i\in\{0\ldots,n-1\}\) the set \(\Sur([n],[k])\) is the
disjoint union of \(S^{n,k}_i\) and \(\Image\overline{\sigma}_i:\)
\begin{displaymath}
\Sur([n],[k]) = S^{n,k}_i \amalg \Image\overline{\sigma}_i.
\end{displaymath}
\end{prop}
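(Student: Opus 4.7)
The plan is to establish the two properties constituting a disjoint union decomposition: mutual exclusion of $S^{n,k}_i$ and $\Image\overline{\sigma}_i$, and exhaustion of $\Sur([n],[k])$. For the first I will use Lemma~\ref{bareffect}(a) to pin down the precise shape of $\mu^*$ whenever $\mu=\nu\sigma_i$ lies in $\Image\overline{\sigma}_i$; for the second I will convert the union problem into a cardinality identity via Lemmas~\ref{sizesur}, \ref{sizesin} and \ref{deltagamma=identity}, combined with Pascal's rule.

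For disjointness, I would fix $\mu=\nu\sigma_i$ with $\nu\in\Sur([n-1],[k])$ and write $\nu^*=(x,d,y)$ with $|x|<i+1\le |x|+d$ as in Lemma~\ref{bareffect}(a). That lemma then delivers $\mu^*=(x,d+1,y)$. Any decomposition $\mu^*=(x',y')$ in the sense of Definition~\ref{initial} requires $|x'|$ to equal a partial sum of the component sequence of $\mu^*$. All such partial sums either lie inside the initial $x$-block (so are at most $|x|$, hence strictly less than $i+1$) or are at least $|x|+d+1$; since $|x|+d\ge i+1$ forces $|x|+d+1\ge i+2>i+1$, the value $i+1$ is skipped. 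Hence $|x'|=i+1$ is impossible, and $\mu\notin S^{n,k}_i$.

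For exhaustion, Lemma~\ref{deltagamma=identity} says that $\overline{\sigma}_i$ is injective, so $|\Image\overline{\sigma}_i|=|\Sur([n-1],[k])|=\binom{n-1}{k}$ by Lemma~\ref{sizesur}. Combined with $|S^{n,k}_i|=\binom{n-1}{k-1}$ from Lemma~\ref{sizesin} and Pascal's identity
\[
\binom{n-1}{k-1}+\binom{n-1}{k}=\binom{n}{k}=|\Sur([n],[k])|,
\]
the disjointness already proved forces $S^{n,k}_i\cup\Image\overline{\sigma}_i=\Sur([n],[k])$.

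The main obstacle is the disjointness step, specifically seeing that the block-enlargement $d\mapsto d+1$ recorded by Lemma~\ref{bareffect}(a) precisely obscures the value $i+1$ among the available partial sums of $\mu^*$. Once the bookkeeping with the inequalities $|x|<i+1\le |x|+d$ is in hand, the remaining binomial accounting is a direct appeal to the cardinality lemmas already at our disposal.
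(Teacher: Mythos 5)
Your proof is correct and follows the same overall two-step structure as the paper: first disjointness, then the binomial counting argument via Lemmas~\ref{sizesur}, \ref{sizesin} and \ref{deltagamma=identity} together with Pascal's identity; the counting half is essentially identical to the paper's. The one point of divergence is the disjointness step: the paper argues directly at the level of functions, observing that if $i$ is the maximum of some fibre $\mu^{-1}(l)$ then $\mu(i)\ne\mu(i+1)$, while $\mu=\nu\sigma_i$ forces $\mu(i)=\mu(i+1)$; you instead translate everything into the language of partitions, invoking Lemma~\ref{bareffect}(a) to write $\mu^*=(x,d+1,y)$ and then noting that the partial sums jump from $|x|<i+1$ to $|x|+d+1\ge i+2$, so $i+1$ is skipped. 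Both are correct and short; the paper's version is arguably a bit more transparent because it bypasses the bookkeeping of Lemma~\ref{bareffect}(a), whereas yours has the mild advantage of working entirely with the partition encoding that is used throughout the rest of the paper.
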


Note \(S_n^{n,k} = \Sur([n],[k])\) and there is no map
\(\overline{\sigma}_n.\)

\begin{proof}
First we prove \(S^{n,k}_i\) and \(\Image\overline{\sigma}_i\) are
disjoint. Let \(\mu \in \Sur([n],[k])\). The partition \(\mu^*\)
has an initial partition of \(i+1\) if and only if there is some
\(l\) such that \(i\) is the maximal element of \(\mu^{-1}(l)\)
(remember \(i\) is the \((i+1)^\textrm{th}\) element of \([n]\)).
If \(i\) is the maximal element of \(\mu^{-1}(l)\) then \(\mu(i)
\ne \mu(i+1)\). But \(\mu \in \Image\overline{\sigma}_i\) means
that for some \(\nu\in \Sur([n-1],[k])\) we have \(\mu =
\nu\sigma_i\). So
\(\mu(i)=\nu\sigma_i(i)=\nu(i)=\nu\sigma_i(i+1)=\mu(i+1)\).
Therefore \(\mu\) cannot be both in \(S^{n,k}_i\) and
$\Image\overline{\sigma}_i$.

Now we prove that the union of \(S^{n,k}_i\) and
\(\Image\overline{\sigma}_i\) form the whole of \(\Sur([n],[k])\)
by using a counting argument. We know that \(S^{n,k}_i \cap
\Image\overline{\sigma}_i = \emptyset\) so \(|S^{n,k}_i \cup
\Image\overline{\sigma}_i| = |S^{n,k}_i| +
|\Image\overline{\sigma}_i|\). Lemma~\ref{deltagamma=identity}
tells us that \(\overline{\sigma}_i\) is injective.  From this we
see that \(|S^{n,k}_i| + |\Image\overline{\sigma}_i| = |S^{n,k}_i|
+ |\Sur([n-1],[k])|\) and using Lemmas \ref{sizesur} and
\ref{sizesin} we obtain \( |S^{n,k}_i| + |\Sur([n-1],[k])| =
\binom{n-1}{k-1} + \binom{n-1}{k} = \binom{n}{k} =
|\Sur([n],[k])|\), as desired.
\end{proof}

\begin{thm} \label{well behaved}
\begin{enumerate}[(a)]
\item For each \(i \in \{0,\ldots,n-1\} \) the map
\(\overline{\sigma}_i:\Mor([n-1],[k])\rightarrow\Mor([n],[k])\) is
strictly order-preserving. \item For each \(i\in\{0,\ldots,n-1\}\)
the map \(\overline{\delta}_i\) is strictly order-preserving on
both \(\Image\overline{\sigma}_i\) and \(S^{n,k}_i\), and
\(\overline{\delta}_n\) is strictly order-preserving on
\(\Sur([n],[k])=S^{n,k}_n\).
\end{enumerate}
\end{thm}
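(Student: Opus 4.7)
The plan is to exploit Lemma~\ref{bareffect}, according to which both $\overline{\sigma}_i$ and $\overline{\delta}_i$ act on the partition $\mu^*$ by modifying a single entry (by $\pm 1$) at the position of the block containing the $(i+1)$st element of $[n-1]$, respectively $[n]$. Given $\mu<\nu$ with their partitions agreeing on a common initial segment of length $j_0-1$ and $\mu^*_{j_0}<\nu^*_{j_0}$, I will compare the position $L_\mu+1$ of the modification with $j_0$, and similarly for $\nu$, and show in every case that the modified partitions remain in the same strict order.

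For part (a) there are three cases for $L_\mu+1$ relative to $j_0$. If $L_\mu+1<j_0$, the modification happens within the common initial segment, and since $\mu^*_j=\nu^*_j$ for $j<j_0$ the cumulative sums match through position $L_\mu+1$, forcing $L_\nu+1=L_\mu+1$; so both partitions receive the same increment in their common part and the first disagreement at $j_0$ is unchanged. If $L_\mu+1=j_0$, then $|x|<i+1\le |x|+\mu^*_{j_0}<|x|+\nu^*_{j_0}$ forces $L_\nu+1=j_0$ as well, and the strict inequality persists after incrementing both $j_0$th entries by $1$. If $L_\mu+1>j_0$, then $(\mu\sigma_i)^*_{j_0}=\mu^*_{j_0}$ while $(\nu\sigma_i)^*_{j_0}$ is either $\nu^*_{j_0}$ or $\nu^*_{j_0}+1$, each exceeding $\mu^*_{j_0}$.

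For part (b), the assertion on $\Image\overline{\sigma}_i$ is immediate from part (a) and $\overline{\delta}_i\overline{\sigma}_i=\id$ (Lemma~\ref{deltagamma=identity}), which identifies $\overline{\delta}_i|_{\Image\overline{\sigma}_i}$ with the inverse of the order-isomorphism $\overline{\sigma}_i$. For $\overline{\delta}_n$ on $S^{n,k}_n=\Sur([n],[k])$, the decrement always lands on the last entry of $\mu^*$, hence at a fixed position, so the lex order is preserved. The substantive case is $\overline{\delta}_i$ on $S^{n,k}_i$: the decremented position $B_\mu$ (resp.\ $B_\nu$) is now pinned down by the requirement that the cumulative sum of $\mu^*$ reaches exactly $i+1$ at the end of block $B_\mu$. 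Mirroring the case analysis of (a), cumulative-sum arguments rule out every mismatch between $B_\mu$ and $B_\nu$ except possibly $j_0=B_\nu$ and $B_\mu=j_0+1$. In that exceptional configuration the constraints force $\mu^*_{j_0}=\nu^*_{j_0}-1$ and $\mu^*_{j_0+1}=1$, so the decrements make $(\mu\delta_i)^*$ and $(\nu\delta_i)^*$ agree at position $j_0$, and the strict inequality surfaces at position $j_0+1$, where $(\mu\delta_i)^*_{j_0+1}=0<\nu^*_{j_0+1}=(\nu\delta_i)^*_{j_0+1}$. I expect this last subcase to be the main obstacle, as it is the only one where the decrement genuinely creates a zero entry; one then has to trust that lex-comparing possibly improper partitions still yields the desired conclusion.
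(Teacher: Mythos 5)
Your proof is correct and follows essentially the same route as the paper's: apply Lemma~\ref{bareffect}, compare the index of the modified block against the first index of disagreement, and single out the one delicate subcase (with $B_\nu = j_0$, $\mu^*_{j_0}=\nu^*_{j_0}-1$, $B_\mu=j_0+1$) where the decrement creates a zero entry; the only genuinely independent ingredient is your cleaner direct handling of $\overline{\delta}_n$ (the decrement always lands on the last entry), which the paper instead obtains by running $i=n$ through the general $S^{n,k}_i$ case analysis. Your closing worry is unfounded: Definition~\ref{order} puts the lexicographic order on all of $\Mor([n-1],[k])$ via $\mu\mapsto\mu^*$, improper partitions included, so the final comparison $(\mu\delta_i)^*_{j_0+1}=0<\nu^*_{j_0+1}=(\nu\delta_i)^*_{j_0+1}$ is fully rigorous and is exactly the step the paper's proof makes.
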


Note that while \(\overline{\delta}_i\) is order-preserving on
these two complementary sets of \(\Sur([n],[k])\) it is \emph{not}
order-preserving on the whole of \(\Sur([n],[k])\); for an
illustration of this look at the calculation at the end of Section
\ref{fdinGamma}.

\begin{proof}

(a) Suppose \(\mu,\nu\in \Mor([n-1],[k])\) and \(\mu<\nu\). As in
Lemma~\ref{bareffect} we write the partition \(\mu^*\) in the form
\((x,d,y)\) where \(|x|<i+1\le |x|+d.\) Let \(a\) be the longest
common partition of \(\mu^*\) and \(\nu^*,\) so \(\mu^*=(a,b)\)
and \(\nu^*=(a,c)\) for appropriate partitions \(b\) and \(c\)
with \(b_1<c_1.\) We will show the desired inequality
\((\mu\sigma_i)^*<(\nu\sigma_i)^*\) by distinguishing three cases:
(i) $a$ is longer than $x,$ (ii) $a$ has the same length as $x$
and (iii) $a$ is shorter than~$x.$

(i) If $a$ is longer than $x$ then we can write $a$ in the form
$(x,d,w)$ for some (possibly empty) partition $w.$  Then
\(\mu^*=(x,d,w,b)\) and \(\nu^*=(x,d,w,c)\) and hence by
Lemma~\ref{bareffect} we see that \((\mu\sigma_i)^*=(x,d+1,w,b)\)
and \((\nu\sigma_i)^*=(x,d+1,w,c)\). So the longest common initial
partition of \((\mu\sigma_i)^*\) and \((\nu\sigma_i)^*\) is
\((x,d+1,w)\) and, since \(b_1<c_1,\) we see that
\((\mu\sigma_i)^*<(\nu\sigma_i)^*.\)

(ii) If \(a\) has the same length as \(x\) (i.e.\ if \(a=x\)) then
\(d=b_1\) and since we have \(b_1<c_1\) we see \(i+1\le
|x|+d<|x|+c_1.\) Using Lemma~\ref{bareffect} we obtain
\((\mu\sigma_i)^*=(x,b_1+1,y)\) and
\((\nu\sigma_i)^*=(x,c_1+1,z)\) for appropriate partitions $y$ and
$z.$ So the longest common initial partition of
\((\mu\sigma_i)^*\) and \((\nu\sigma_i)^*\) is \(x\) and, since
\(b_1+1<c_1+1,\) we see that \((\mu\sigma_i)^*<(\nu\sigma_i)^*.\)

(iii) If $x$ is longer than $a$ we write \(x=(a,x')\) for some
non-empty partition $x'$. Then \(\mu^*=(x,d,y)=(a,x',d,y).\) As in
Lemma~\ref{bareffect} we write \(\nu^*=(w,d',z)\) where $|w| < i+1
\le |w|+d'$. We know that \(|a|\le |x|<i+1\) and \(a\) is an
initial partition of \(\nu^*\) so \(w=(a,w')\) for some possibly
empty partition $w'.$ We now show the desired inequality
\((\mu\sigma_i)^*<(\nu\sigma_i)^*\) by distinguishing two
subcases: ($\alpha$) $w'$ is non-empty, ($\beta$) $w'$ is empty.

($\alpha$) If $w'$ is not empty then \(\mu^*=(a,x',d,y)\) and
\(\nu^*=(a,w',d',z).\) Since \(\mu^*<\nu^*\) we find that
\(x'_1<w'_1.\) Applying Lemma~\ref{bareffect} we find that
\((\mu\sigma_i)^*=(a,x',d+1,y)\) and
\((\nu\sigma_i)^*=(a,w',d'+1,z).\) So the longest common initial
partition of \((\mu\sigma_i)^*\) and \((\nu\sigma_i)^*\) is \(a\)
and, since \(x'_1<w'_1\) we see that
\((\mu\sigma_i)^*<(\nu\sigma_i)^*.\)

($\beta$) If $w'$ is empty then \(\mu^*=(a,x',d,y)\) and
\(\nu^*=(a,d',z)\) where \(|a|<i+1\le|a|+d'.\) Since
\(\mu^*<\nu^*\) we see that \(x'_1<d'.\) Applying
Lemma~\ref{bareffect} we find that
\((\mu\sigma_i)^*=(a,x',d+1,y)\) and
\((\nu\sigma_i)^*=(a,d'+1,z).\) So the longest common initial
partition of \((\mu\sigma_i)^*\) and \((\nu\sigma_i)^*\) is \(a\)
and, since \(x_1<d'<d'+1\) we see that
\((\mu\sigma_i)^*<(\nu\sigma_i)^*.\)

(b) That \(\overline{\delta}_i\) is order-preserving on
\(\Image\overline{\sigma}_i\) follows directly from
Lemma~\ref{deltagamma=identity} and part~(a). Although (the first
half of) the proof that $\overline{\delta}_i$ is strictly
order-preserving on $S^{n,k}_i$ is pretty similar to (the first
half of) the proof of part~(a) we include all details for the
reader's convenience.

Suppose \(\mu, \nu \in S^{n,k}_i\) with \(\mu<\nu\). As in
Lemma~\ref{bareffect} we write the partition \(\mu^*\) in the form
\((x,d,y)\) where \(|x|<i+1\le |x|+d.\) Let \(a\) be the longest
common partition of \(\mu^*\) and \(\nu^*,\) so \(\mu^*=(a,b)\)
and \(\nu^*=(a,c)\) for appropriate partitions \(b\) and \(c\)
with \(b_1<c_1.\) We will now show the desired inequality
\((\mu\delta_i)^*<(\nu\delta_i)^*\) by distinguishing three cases:
(i)~$a$ is longer than $x,$ (ii)~$a$ has the same length as $x$
and (iii)~$a$ is shorter than $x$. Only case~(iii) will make use
of the assumption that $\mu, \nu \in S^{n,k}_i$.

(i) If $a$ is longer than $x$ then we can write $a$ in the form
$(x,d,w)$ for some (possibly empty) partition $w.$  Then
\(\mu^*=(x,d,w,b)\) and \(\nu^*=(x,d,w,c)\) and hence by
Lemma~\ref{bareffect} we see that \((\mu\delta_i)^*=(x,d-1,w,b)\)
and \((\nu\delta_i)^*=(x,d-1,w,c)\). So the longest common initial
partition of \((\mu\delta_i)^*\) and \((\nu\delta_i)^*\) is
\((x,d-1,w)\) and, since \(b_1<c_1,\) we see that
\((\mu\delta_i)^*<(\nu\delta_i)^*.\)

(ii) If \(a\) has the same length as \(x\) (i.e.\ if \(a=x\)) then
\(d=b_1\) and since we have \(b_1<c_1\) we see \(i+1\le
|x|+d<|x|+c_1.\) Using Lemma~\ref{bareffect} we obtain
\((\mu\delta_i)^*=(x,b_1-1,y)\) and
\((\nu\delta_i)^*=(x,c_1-1,z)\) for appropriate partitions $y$ and
$z.$ So the longest common initial partition of
\((\mu\delta_i)^*\) and \((\nu\delta_i)^*\) is \(x\) and, since
\(b_1-1<c_1-1,\) we see that \((\mu\delta_i)^*<(\nu\delta_i)^*.\)

(iii) If $x$ is longer than $a$ we write \(x=(a,x')\) for some
non-empty partition $x'$. Then \(\mu^*=(x,d,y)=(a,x',d,y).\) As in
Lemma~\ref{bareffect} we write \(\nu^*=(w,d',z)\) with $|w| < i+1
\le |w|+d'$. We know that \(|a|\le |x|<i+1\) and \(a\) is an
initial partition of \(\nu^*\) so \(w=(a,w')\) for some possibly
empty partition $w'.$ We now show the desired inequality
\((\mu\sigma_i)^*<(\nu\sigma_i)^*\) by distinguishing two
subcases: ($\alpha$) $w'$ is non-empty, ($\beta$) $w'$ is empty.

($\alpha$) If $w'$ is not empty then \(\mu^*=(a,x',d,y)\) and
\(\nu^*=(a,w',d',z).\) Since \(\mu^*<\nu^*\) we find that
\(x'_1<w'_1.\) Applying Lemma~\ref{bareffect} we find that
\((\mu\delta_i)^*=(a,x',d-1,y)\) and
\((\nu\delta_i)^*=(a,w',d'-1,z).\) So the longest common initial
partition of \((\mu\delta_i)^*\) and \((\nu\delta_i)^*\) is \(a\)
and, since \(x'_1<w'_1,\) we see that
\((\mu\delta_i)^*<(\nu\delta_i)^*.\)

($\beta$) If $w'$ is empty then \(\mu^*=(a,x',d,y)\) and
\(\nu^*=(a,d',z)\) where \(|a|<i+1\le|a|+d'.\) Since
\(\mu^*<\nu^*\) we see that \(x'_1<d'.\) Applying
Lemma~\ref{bareffect} we find that
\((\mu\delta_i)^*=(a,x',d-1,y)\) and
\((\nu\delta_i)^*=(a,d'-1,z).\) As \(x'_1<d'\) we have either
\(x'_1<d'-1\) or \(x'_1=d'-1\).

If \(x'_1<d'-1\) then the longest common initial partition of \((\mu\delta_i)^*\) and \((\nu\delta_i)^*\) is \(a\) and,
since \(x'_1<d'-1,\) we have \((\mu\delta_i)^*<(\nu\delta_i)^*.\)

If \(x'_1=d'-1,\) then we observe: we have written \(\nu^*\) as
\((a,d',z)\) so that \(|a|<i+1\le |a|+d',\) but \(\nu\in
S^{n,k}_i\) so \(\nu^*\) begins with a partition of \(i+1,\) hence
\(|a|+d'=i+1.\)  Now \(i+1=|a|+d'=|a|+x'_1+1,\) so \(|a|+x'_1=i,\)
i.e.\ the partition \((a,x'_1)\) (which is an initial partition of
\(\mu^*\)) is a partition of \(i.\) But \(\mu\in S^{n,k}_i\) so
\(\mu\) begins with a partition of \(i+1\) and \(\mu^*\) is a
proper partition, so \(\mu^*\) begins with the partition
\((a,x'_1,1)\), i.e.\ \(x=(a,x'_1)\) and \(d=1.\) So
\(\mu^*=(a,x'_1,1,y)\)  and \(\nu^*=(a,d',z).\)

%

By Lemma~\ref{bareffect} we find that
\((\mu\delta_i)^*=(a,x'_1,0,y)\) and
\((\nu\delta_i)^*=(a,d'-1,z)=(a,x'_1,z).\) Since all the entries
of \(\nu^*\) are positive we have \(z_1>0.\) So the longest common
initial partition of \((\mu\delta_i)^*\) and \((\nu\delta_i)^*\)
is \((a,x'_1)\) and, since \(0<z_1,\) we find that
\((\mu\delta_i)^*<(\nu\delta_i)^*.\)
\end{proof}

\section{The face and degeneracy operators in the simplicial object \(\Gamma(C.)\)}\label{fdinGamma}

For an abelian category \(\mathcal{A}\) the Dold-Kan
correspondence gives two mutually inverse functors \(\Gamma\) and
\(N\) between the category  \(\Ch_{\ge0}(\mathcal{A})\) of bounded
chain complexes and the category \(\mathcal{SA}\) of simplicial
objects in \(\mathcal{A}\). For a chain complex \(C.\in\Ch_{\ge
0}(\mathcal{A})\) the functor \(\Gamma(C.)\) is usually defined by
\(\Gamma(C.)_n=\bigoplus^{n}_{k=0}\bigoplus_{\sigma\in\Sur([n],[k])}C_k\).
So \(\Gamma (C.)\) contains \(|\Sur([n],[k])|\) copies of \(C_k\)
and these copies are indexed by elements of \(\Sur([n],[k])\). We
write \(\Gamma(C.)_{n,k}\) to denote
\(\bigoplus_{\sigma\in\Sur([n],[k])}C_k\) considered as a sub-sum
of \(\Gamma(C.)_n.\)

The effect of the degeneracy operator
\(s_i:\Gamma(C.)_{n-1}\rightarrow\Gamma(C.)_n\) on the copy of
\(C_k\) indexed by \(\mu\in\Sur([n-1],[k])\) is to identify it
with the copy of \(C_k\in\Gamma(C.)_n\) indexed by
\(\overline\sigma_i(\mu)\) (cf.\ Notation \ref{inducedfns}).

The effect of the face operator
\(d_i:\Gamma(C.)_n\rightarrow\Gamma(C.)_{n-1}\) on the copy of
\(C_k\) indexed by \(\mu\in\Sur([n],[k])\) depends on the nature
of \(\overline\delta_i(\mu)\) (cf.\ Notation \ref{inducedfns}):
\begin{itemize}
\item If \(\overline\delta_i(\mu)\) is surjective then \(C_k\) is identified with the copy of \(C_k\)
indexed by \(\overline\delta_i(\mu)\);

\item If \(\overline\delta_i(\mu)\) is not surjective, and
\(\overline\delta_i(\mu) = \delta_0\hat\mu\) for some \(\hat\mu
\in \Sur([n-1],[k-1]) \) (cf.\ Lemma~\ref{nonsur}) then \(d_i\)
maps the copy of \(C_k\) indexed by \(\mu\) to the copy of
\(C_{k-1}\) indexed by \(\hat\mu\) with the same action as the
differential of \(C.\);

\item If \(\overline\delta_i(\mu)\) is not surjective, and
\(\overline\delta_i(\mu)= \delta_j\hat\mu\) for some \(\hat\mu \in
\Sur([n-1],[k-1]) \) and for some \( j \ne 0 \) (cf.\
Lemma~\ref{nonsur}) then \(C_k\) is mapped to 0.
\end{itemize}

This can be expressed more concisely in symbols than in words.
For \(\mu\in\Sur([n],[k])\) we write \(C_{k,\mu}\) to denote the copy of \(C_k\)
in \(\bigoplus_{\sigma\in\Sur([n],[k])}C_k\) that is contributed by \(\mu\) and
also, for \(m\in C_k\), we write \((m,\mu)\) to denote \(m\in C_{k,\mu}\).
The face and degeneracy maps in \( \Gamma(C.)\) are defined as follows:

\vskip 0.25cm

\(s_i(m,\mu):=(m,\overline{\sigma}_i(\mu)),\)

\vskip 0.25cm

\(d_i(m,\mu):=
\begin{cases}
(m,\overline{\delta}_i(\mu)) & \text{if \(\overline{\delta}_i(\mu)\) is surjective}\\
(\partial(m),\hat\mu)        & \text{if \(\overline{\delta}_i(\mu)=\delta_0\hat\mu\) with \(\hat\mu \in \Sur([n-1],[k-1])\)}\\
0                            & \text{if \(\overline{\delta}_i(\mu)=\delta_j\hat\mu\) with \(\hat\mu \in \Sur([n-1],[k-1])\) and }j\ne0.\\
\end{cases}
\)

\vskip 0.25cm

The object of this section is to rewrite these expressions using
results from the previous section and to thereby make the
calculation of the face and degeneracy operators simpler.

Lemma~\ref{sizesur} tells us that for natural numbers \(n\) and
\(k\)
\[\Gamma(C.)_n = \Gamma(C.)_{n,0} \oplus \Gamma(C.)_{n,1} \oplus \ldots \oplus \Gamma(C.)_{n,n} =  C_0^{\binom{n}{0}} \oplus C_1^{\binom{n}{1}} \oplus \ldots \oplus C_n^{\binom{n}{n}};\]
again each copy of \(C_k\) is
indexed by the element of \(\Sur([n],[k])\) that contributes it.
But now we can use the ordering on \(\Sur([n],[k])\) that we defined in Section \ref{fdinDelta}
to order the copies of \(C_k\).
Because of this we will tend to use the ordinal associated to
\(\mu\in\Sur([n],[k])\) instead of \(\mu\) to index a copy of \(C_k,\)
i.e.\ if \(\mu\) is the \(m^\textrm{th}\) element of \(\Sur([n],[k])\)
we will usually write \(C_{k,m}\) instead of \(C_{k,\mu}\).

Combining various results from the previous section we get the
following proposition. For \(n,k\in\mathbb{N}_0\) and
\(A\subset\Sur([n],[k])\) we write \(A^C\) for the complement of
\(A\) in the set \(\Sur([n],[k])\).

\begin{prop}\label{collatingstuffinDelta} Let $n >0$ and $k \in
\{0, \ldots, n\}$.

\begin{enumerate}[(a)]

\item 
\begin{enumerate}[(i)]
\item For each \(i\in \{0,\ldots,n-1\}\) the sets
\(\Sur([n-1],[k])\) and \((S^{n,k}_i)^C\) have the same
cardinality.

\item The sets \(S^{n,k}_0\) and \(\Sur([n-1],[k-1])\) have the
same cardinality.

\item For each \(i\in \{1,\ldots,n\}\) the sets
\(S^{n-1,k}_{i-1}\) and
\(S^{n,k}_i\setminus\widetilde{S^{n,k}_i}\) have the same
cardinality.

\end{enumerate}

\item 
For each \(i\in \{0,\ldots,n-1\}\) the map
\(\overline{\sigma}_i:\Sur([n-1],[k])\rightarrow \Sur([n],[k])\)
sends the \(l^\textrm{th}\) element of \(\Sur([n-1],[k])\) to the
\(l^\textrm{th}\) element of \((S^{n,k}_i)^C\).

\item 
\begin{enumerate}[(i)]

\item If \(\mu\in S^{n,k}_0\) then for some \(\hat \mu \in
\Sur([n-1],[k-1])\) we have
\(\overline{\delta}_0(\mu)=\delta_0\hat \mu\). Moreover the map
\(\mu\mapsto\hat \mu\) acts on \(S^{n,k}_0\) by sending the
\(l^\textrm{th}\) element of \(S^{n,k}_0\) to the
\(l^\textrm{th}\) element of \({\Sur([n-1],[k-1])}\).

\item For each \(i\in\{0,\ldots,n-1\}\) the map
\(\overline{\delta}_i:\Sur([n],[k])\rightarrow \Mor([n-1],[k])\)
acts on the set \((S^{n,k}_i)^C\) by sending the \(l^\textrm{th}\)
element of \((S^{n,k}_i)^C\) to the \(l^\textrm{th}\) element of
\(\Sur([n-1],[k])\).

\item For each \(i\in\{1,\ldots,n\}\) the map
\(\overline{\delta}_i:\Sur([n],[k])\rightarrow \Mor([n-1],[k])\)
acts on the set \(S^{n,k}_i\setminus\widetilde{S^{n,k}_i}\) by
sending the \(l^\textrm{th}\) element of
\(S^{n,k}_i\setminus\widetilde{S^{n,k}_i}\) to the
\(l^\textrm{th}\) element of \(S^{n-1,k}_{i-1}.\)
\end{enumerate}
\end{enumerate}
\end{prop}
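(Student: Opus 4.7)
The plan is to split the proof into the three parts of the statement and lean heavily on the machinery already built in Section~\ref{fdinDelta}: the size formulas in Lemmas~\ref{sizesur} and \ref{sizesin}, the disjoint union decomposition in Proposition~\ref{sur=Imgammaunionsin}, the effect formulas in Lemma~\ref{bareffect}, the characterisation of non-surjective images in Lemma~\ref{nonsur}, and the order-preservation statements in Theorem~\ref{well behaved}. The thread running through (b), (c)(ii), (c)(iii) is the same: once we know a map is an order-preserving \emph{bijection} between two finite totally ordered sets, it automatically sends the $l$-th element to the $l$-th element; so the work is to identify the image and invoke the appropriate results.

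For part~(a) I would compute the three cardinalities directly. For~(i), $|(S^{n,k}_i)^C|=\binom{n}{k}-\binom{n-1}{k-1}=\binom{n-1}{k}=|\Sur([n-1],[k])|$ by Pascal. For~(ii), $|S^{n,k}_0|=\binom{n-1}{k-1}=|\Sur([n-1],[k-1])|$. For~(iii) with $i\in\{1,\dots,n-1\}$, Pascal again gives $\binom{n-1}{k-1}-\binom{n-2}{k-2}=\binom{n-2}{k-1}=|S^{n-1,k}_{i-1}|$; and for $i=n$ we separately note that $S^{n-1,k}_{n-1}=\Sur([n-1],[k])$ (the convention noted after Proposition~\ref{sur=Imgammaunionsin}) and that $|S^{n,k}_n|-|\widetilde{S^{n,k}_n}|=\binom{n}{k}-\binom{n-1}{k-1}=\binom{n-1}{k}$, matching.

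For part~(b), Proposition~\ref{sur=Imgammaunionsin} identifies $(S^{n,k}_i)^C$ with $\Image\overline\sigma_i$; Lemma~\ref{deltagamma=identity} shows $\overline\sigma_i$ is injective, and Theorem~\ref{well behaved}(a) shows it is strictly order-preserving, so $\overline\sigma_i\colon\Sur([n-1],[k])\to(S^{n,k}_i)^C$ is an order-isomorphism and the claim follows.

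For part~(c), I would treat the three items in order. For~(i), I would first observe that $S^{n,k}_0=\widetilde{S^{n,k}_0}$: the only proper partition with an initial segment of sum $1$ is one starting with a $1$, which has the shape $(x',1,y')$ with $|x'|=0$. Lemma~\ref{nonsur} (with $j=0$) then gives $\overline\delta_0(\mu)=\delta_0\hat\mu$ with $\hat\mu^*=y'$. The assignment $\mu^*=(1,y')\mapsto\hat\mu^*=y'$ is a bijection onto the set of proper partitions of $n$ of length~$k$ (equivalently $\Sur([n-1],[k-1])$) and is clearly order-preserving since both partitions $(1,y'_1)$ and $(1,y'_2)$ agree in the first slot so their lexicographic order is determined by comparing $y'_1$ and $y'_2$. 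For~(ii), $(S^{n,k}_i)^C=\Image\overline\sigma_i$, and by Lemma~\ref{deltagamma=identity} the restriction of $\overline\delta_i$ to this image is a two-sided inverse to $\overline\sigma_i$; part~(b) then immediately yields the indexing statement. For~(iii), I would combine Lemma~\ref{bareffect}(b) with Lemma~\ref{nonsur}: writing $\mu^*=(x,d,y)$ with $|x|+d=i+1$, membership in $S^{n,k}_i\setminus\widetilde{S^{n,k}_i}$ forces $d\ge 2$, so $(\mu\delta_i)^*=(x,d-1,y)$ is proper with the initial segment $(x,d-1)$ summing to $i$, i.e.\ $\mu\delta_i\in S^{n-1,k}_{i-1}$. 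Theorem~\ref{well behaved}(b) gives strict order-preservation on $S^{n,k}_i$ (hence on its subset), so the restricted map is an order-preserving injection into $S^{n-1,k}_{i-1}$, and the equality of cardinalities from~(a)(iii) upgrades it to an order-isomorphism.

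The only genuinely delicate point I expect is the edge case $i=n$ in~(iii): one has to be comfortable with the extended convention $S^{n,k}_n=\Sur([n],[k])$ and check that the argument (written for $i\in\{0,\dots,n-1\}$ via Lemma~\ref{bareffect}(b)) still produces the right image set $S^{n-1,k}_{n-1}=\Sur([n-1],[k])$. The remaining steps are bookkeeping with partitions and lexicographic order, entirely controlled by the results of Section~\ref{fdinDelta}.
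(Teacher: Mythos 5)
Your proof is correct and follows essentially the same route as the paper: the cardinality parts are Pascal computations using Lemmas~\ref{sizesur} and \ref{sizesin}, parts (b), (c)(ii), (c)(iii) are all instances of the principle that a strictly order-preserving injection between finite totally ordered sets of equal size is an order-isomorphism (invoking Proposition~\ref{sur=Imgammaunionsin}, Lemma~\ref{deltagamma=identity}, and Theorem~\ref{well behaved}), and part (c)(i) is Lemma~\ref{nonsur} together with the visibly order-preserving map $(1,y)\mapsto y$. The only cosmetic differences are that you compute $|(S^{n,k}_i)^C|$ directly by Pascal where the paper appeals to Proposition~\ref{sur=Imgammaunionsin}, and you derive (c)(ii) by observing $\overline\delta_i$ is the inverse of the order-isomorphism $\overline\sigma_i$ rather than by citing Theorem~\ref{well behaved}(b) --- but that is precisely how Theorem~\ref{well behaved}(b) itself is proved for $\Image\overline\sigma_i$, so the content is identical.
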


Part~(a) of this proposition ensures that the later statements are
well defined.

Note for \(i\ne 0\) we do not describe the action of
\(\overline\delta_i\) on \(\widetilde{S^{n,k}_i}\) because from
Lemma~\ref{nonsur} we know for \(\mu\in \widetilde{S^{n,k}_i}\)
the map~\(\overline\delta_i(\mu)\) will be a non-surjection equal
to \(\delta_j\hat\mu\) where \(j\ne 0\), hence the action of
\(d_i\) on \(C_{k,\mu}\) will just be the zero map (see the
definition of \(\Gamma\) at the beginning of this section).

\begin{proof}
Part~(a)(i) follows from Proposition~\ref{sur=Imgammaunionsin} and
the injectivity of \(\overline{\sigma}_i\)
(Lemma~\ref{deltagamma=identity}). Part~(a)(ii) follows from
Lemmas \ref{sizesur} and \ref{sizesin}. Lemma~\ref{sizesin}
furthermore tells us that for \(i\in \{1,\ldots,n-1\}\) we have
\(|S^{n,k}_i| = \binom{n-1}{k-1}\) and that
\(|\widetilde{S^{n,k}_i}| = \binom{n-2}{k-2}\), and therefore
\(|S^{n,k}_i\setminus\widetilde{S^{n,k}_i}| = \binom{n-1}{k-1} -
\binom{n-2}{k-2} = \binom{n-2}{k-1} =|S^{n-1,k}_{i-1}|\) (the
final step is given by Lemma~\ref{sizesin} again). Furthermore
\(S_n^{n,k} = \Sur([n],[k])\) and by using Lemma~\ref{sizesin}
twice we see that \(|\widetilde{S^{n,k}_n}|=\binom{n-1}{k-1}\) so
\(|S^{n,k}_n\setminus\widetilde{S^{n,k}_n}|=\binom{n}{k}-\binom{n-1}{k-1}=\binom{n-1}{k}=|S_{n-1}^{n-1,k}|.\)
So we have shown part~(a)(iii) of this theorem for all \(i\in
\{1,\ldots,n\}\).

Part~(b) is seen by applying Proposition~\ref{sur=Imgammaunionsin}
and Theorem~\ref{well behaved}(a) to part~(a)(i).


If \(\mu\in S^{n,k}_0\) then \(\mu^*\) is of the form \((1,y)\)
for an appropriate partition \(y.\) Applying Lemma~\ref{nonsur}
gives us the first sentence of part~(c)(i), and also tells us that
\(\hat\mu^*\) is the partition \(y\). Clearly the map that sends
\((1,y)\) to \(y\) is order-preserving. Now using (a)(ii) we get
(c)(i). By applying Theorem~\ref{well behaved}(b) to part~(a)(i)
we get part (c)(ii). Finally part~(c)(iii) follows by applying
Theorem~\ref{well behaved}(b) to part~(a)(iii) of this statement.
Note that $\overline{\delta}_i(S^{n,k}_i \setminus
\widetilde{S^{n,k}_i}) \subseteq S^{n-1,k}_{i-1}$.
\end{proof}

\begin{thm}\label{bigGammathm}
Let \(n>0\).

\begin{enumerate}[(a)] 

\item Let \(i\in\{0,\ldots,n-1\},\) fix \(k\in\{0,\ldots,n\}\) and
let \(\underline{c}\in\Gamma(C.)_{n-1,k}\), then we have
\(s_i(\underline{c})\in\Gamma(C.)_{n,k}.\) More precisely, write
\(\underline{c}=(c_1,\ldots,c_{\binom{n-1}{k}})\) and
\(s_i(\underline{c})=(b_1,\ldots,b_{\binom{n}{k}})\); then
\(s_i(\underline{c})\) is given by the following relations:
\begin{enumerate}[(i)]
\item If the \(l^\textrm{th}\) element of \(\Sur([n],[k])\) is an
element of \(S^{n,k}_{i}\) then then \(b_{l}=0\).

\item If the \(l^\textrm{th}\) element of \(\Sur([n],[k])\) is the
\(m^{\textrm{th}}\) element of  \((S^{n,k}_{i})^C\) then
\(b_{l}=c_{m}.\)
\end{enumerate}

\item Let
\(\underline{c}=(c_{k,l})_{k=0,\ldots,n;l=1,\ldots,\binom{n}{k}}
\in\Gamma(C.)_n.\) Then
\(d_0(\underline{c})=(b_{k,l})_{k=0,\ldots,n-1;l=1,\ldots,\binom{n-1}{k}}\in\Gamma(C.)_{n-1}\)
is given by the following relation:
\(b_{k,l}=\partial(c_{k+1,l})+c_{k,\binom{n-1}{k-1}+l}\).

\item Let \(i\in\{1,\ldots,n-1\},\) fix \(k\in\{0,\ldots,n\}\) and
let \(\underline{c}\in\Gamma(C.)_{n,k},\) then we have
\(d_i(\underline{c})\in\Gamma(C.)_{n-1,k}.\) More precisely, write
\(\underline{c}=(c_1,\ldots,c_{\binom{n}{k}})\) and
\(d_i(\underline{c})=(b_1,\ldots,b_{\binom{n-1}{k}})\); then
\(d_i(\underline{c})\) is given by the following relations:

\begin{enumerate}[(i)]
\item If the \(l^\textrm{th}\) element of \(\Sur([n-1],[k])\) is
an element of \((S^{n-1,k}_{i-1})^C\) then
\(b_{l}=c_{\alpha(l)}\), where \(\alpha(l)\) is the ordinal
associated with the \(l^{\textrm{th}}\) element of
\((S^{n,k}_{i})^C\).

\item If the \(l^\textrm{th}\) element of \(\Sur([n-1],[k])\) is
the \(m^{\textrm{th}}\) element of \(S^{n-1,k}_{i-1}\) then
\(b_{l}=c_{\alpha(l)}+c_{\beta(m)}\) where \(\alpha(l)\) is the
ordinal associated to the \(l^\textrm{th}\) element of
\((S^{n,k}_i)^C\) and \(\beta(m)\) is the ordinal associated with
the \(m^{\textrm{th}}\) element of
\(S^{n,k}_i\setminus\widetilde{S^{n,k}_i}.\)
\end{enumerate}

\item Fix \(k\in\{0,\ldots,n\}\) and let
\(\underline{c}\in\Gamma(C.)_{n,k},\) then we have
\(d_n(\underline{c})\in\Gamma(C.)_{n-1,k}.\) More precisely, write
\(\underline{c}=(c_1,\ldots,c_{\binom{n}{k}})\) and
\(d_n(\underline{c})=(b_1,\ldots,b_{\binom{n-1}{k}})\); then
\(d_n(\underline{c})\) is given by the following relation:

Let $\beta(l)$ denote the ordinal associated with the
$l^{\textrm{th}}$ element of $S^{n,k}_n \setminus
\widetilde{S^{n,k}_n} = \Sur([n],[k]) \setminus S_{n-1}^{n,k}$;
then $b_l = c_{\beta(l)}$.

\end{enumerate}

\end{thm}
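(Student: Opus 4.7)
My plan is to unwind each formula directly from the definitions of $s_i$ and $d_i$ given at the start of this section, using Proposition~\ref{collatingstuffinDelta} as the combinatorial translation between those defining formulas and the lex ordering on $\Sur([n],[k])$. Part~(a) is essentially immediate: the formula $s_i(m,\mu)=(m,\overline{\sigma}_i(\mu))$ shows that $s_i$ sends $\Gamma(C.)_{n-1,k}$ into $\Gamma(C.)_{n,k}$ with image supported on the summands indexed by $\Image\overline{\sigma}_i=(S^{n,k}_i)^C$ (Proposition~\ref{sur=Imgammaunionsin}). Summands indexed by $S^{n,k}_i$ receive no contribution, giving the zeros in~(i), while Proposition~\ref{collatingstuffinDelta}(b) identifies the $m^\text{th}$ source element with the $m^\text{th}$ element of $(S^{n,k}_i)^C$, giving~(ii).

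For parts~(c) and~(d) I would first discard $\widetilde{S^{n,k}_i}$: for $i\geq 1$, Lemma~\ref{nonsur} gives $\overline{\delta}_i(\mu)=\delta_j\hat\mu$ with $j\neq 0$ on these summands (its last clause says $j=0$ iff $i=0$), so $d_i$ kills them. The complement $\Sur([n],[k])\setminus\widetilde{S^{n,k}_i}$ decomposes as $(S^{n,k}_i)^C\sqcup(S^{n,k}_i\setminus\widetilde{S^{n,k}_i})$, and on each piece $\overline{\delta}_i$ is an order-preserving bijection onto $\Sur([n-1],[k])$ respectively $S^{n-1,k}_{i-1}$ by Proposition~\ref{collatingstuffinDelta}(c)(ii),(iii). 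Reading backwards from the target summand $C_{k,\nu}$ indexed by the $l^\text{th}$ element of $\Sur([n-1],[k])$, it picks up $c_{\alpha(l)}$ from the $l^\text{th}$ element of $(S^{n,k}_i)^C$ and, if in addition $\nu$ is the $m^\text{th}$ element of $S^{n-1,k}_{i-1}$, an extra $c_{\beta(m)}$ from the $m^\text{th}$ element of $S^{n,k}_i\setminus\widetilde{S^{n,k}_i}$. This proves~(c); for~(d), $(S^{n,k}_n)^C$ is empty, so only the $\beta$-term survives and $S^{n-1,k}_{n-1}=\Sur([n-1],[k])$ covers every target.

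Part~(b) follows the same pattern with two twists: $\widetilde{S^{n,k}_0}=S^{n,k}_0$ and, by Lemma~\ref{nonsur}, $j=0$ on these summands, so they contribute $\partial$ of the component rather than zero, with $\mu\mapsto\hat\mu$ acting on $S^{n,k+1}_0$ as in Proposition~\ref{collatingstuffinDelta}(c)(i). The crucial bookkeeping point is that $S^{n,k}_0$ consists of exactly those $\mu$ with $\mu^*_1=1$, which in lex order are the first $\binom{n-1}{k-1}$ elements of $\Sur([n],[k])$; this both identifies $c_{k+1,l}$ (since $S^{n,k+1}_0$ is an initial segment, its $l^\text{th}$ element is the $l^\text{th}$ element of $\Sur([n],[k+1])$) and forces the global ordinal of the $l^\text{th}$ element of $(S^{n,k}_0)^C$ to be $\binom{n-1}{k-1}+l$, yielding $b_{k,l}=\partial(c_{k+1,l})+c_{k,\binom{n-1}{k-1}+l}$. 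The main obstacle I expect is purely bookkeeping: keeping global ordinals in $\Sur([n],[k])$ straight against local ordinals in subsets such as $(S^{n,k}_i)^C$ or $S^{n-1,k}_{i-1}$, and in~(b) verifying the initial-segment property of $S^{n,k}_0$; once this combinatorics is clearly laid out, everything reduces to substitution into Proposition~\ref{collatingstuffinDelta}.
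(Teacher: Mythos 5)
Your argument is correct and follows essentially the same route as the paper's proof: part (a) via Proposition~\ref{collatingstuffinDelta}(b), parts (c) and (d) by combining Lemma~\ref{nonsur} with Proposition~\ref{collatingstuffinDelta}(c)(ii)--(iii), and part (b) from the observation that $S^{n,k}_0$ (equivalently $\widetilde{S^{n,k}_0}$) is the initial segment of $\Sur([n],[k])$ of size $\binom{n-1}{k-1}$ together with Proposition~\ref{collatingstuffinDelta}(c)(i)--(ii). The paper states these dependencies more tersely, but your filled-in bookkeeping (including the shift $k\mapsto k+1$ when applying (c)(i) and the ordinal offset $\binom{n-1}{k-1}+l$) is exactly the intended unwinding.
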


\begin{proof}

Part~(a) follows from Proposition~\ref{collatingstuffinDelta}(b).
To prove part~(b) we first observe that
\(S^{n,k}_0=\{\mu\in\Sur([n],[k])\mid \mu^*=(1,x) \text{ where }
|x|=n)\}\); so \(S^{n,k}_0\) consists of the first
\(\binom{n-1}{k-1}\)~elements of \(\Sur([n],[k])\). Now part~(b)
follows from Proposition~\ref{collatingstuffinDelta}(c)(i) and
(c)(ii). Part~(c) follows from Lemma~\ref{nonsur} and
Proposition~\ref{collatingstuffinDelta}(c)(ii) and (c)(iii).
Finally part~(d) follows from Lemma~\ref{nonsur} and
Proposition~\ref{collatingstuffinDelta}(c)(iii).
\end{proof}

In Example~\ref{face and degeneracy operator calculation example}
below we look at the case when the chain complex $C.$ is of
length~$2$, to help elucidate the previous results. But first we
give some general instructions on how to read that example.

While part~(b) of the previous theorem is a very explicit formula
which allows to instantly describe the action of the face operator
$d_0$ we first need to calculate the sets $S^{n,k}_i$ (and
$\widetilde{S^{n,k}_i}$) to be able to use the other parts for
describing the degeneracy operators and the other face operators.

For each \(n\) that we are concerned with (the position in the
simplicial complex $\Gamma(C.)$) and each
\(k\in\{1,\ldots,\min(n,l)\}\) (where \(l\) stands for the length
of the chain complex $C.$) we draw a table to help us determine
these sets. We label the columns of the table by the possible
values of \(i\) (0 through to \(n\)). We label the rows of the
table with both the partition and the ordinal associated with the
elements of \(\Sur([n],[k])\). If a cell in the table has its
column labelled by \(i\) and its row is labelled by a partition
\(\mu^*\) that has an initial partition of \(i+1\) then we mark
the cell with a \(\times\) mark, if that initial partition ends
with a 1 then we also mark the cell with a \(^*\). So if a cell is
marked with a \(\times\) mark then the corresponding surjection
\(\mu\) is an element of the set \(S^{n,k}_i\), if the cell is
also marked with a \(^*\) then \(\mu\) is an element of the set
\(\widetilde{S^{n,k}_i}\). We do not draw any tables for $k=0$
because all face and degeneracy operators act just as the identity
on the single copy of $C_0$ in $\Gamma(C.)_n$.

We now explain how to use the tables we have made to calculate the
degeneracy operators. For this paragraph we fix
\(i\in\{0,\ldots,n-1\}\) and \(k\in\{0,\ldots,n\},\) let
\(\underline{c}\in\Gamma(C.)_{n-1,k}\) and write
\(\underline{c}=(c_1,\ldots,c_{\binom{n-1}{k}}).\) The vector
\(s_i(\underline{c})\in\Gamma(C.)_{n,k}\) is an
\(\binom{n}{k}\)-tuple. By Theorem~\ref{bigGammathm}(a) the
entries of \(s_i(\underline{c})\) are either \(0\) or one of
\(c_1,\ldots,c_{\binom{n-1}{k}}\); more specifically
\(c_1,\ldots,c_{\binom{n-1}{k}}\) each occur once in
\(s_i(\underline{c})\) and occur {\em in order}, with zeroes in
all the other entries.  We find where the zeroes are in
\(s_i(\underline{c})\) by looking at the column labelled \(i\) in
the table we made for \((n,k)\); if there is an \(\times\) in the
\(l^{\rm{th}}\) row of this column, then (by
Theorem~\ref{bigGammathm}(a)(i)) the \(l^{\rm{th}}\) entry of
\(s_i(\underline{c})\) is zero.

We now explain how to calculate the face operator \(d_n.\) For
this paragraph we fix \(k\in\{0,\ldots,n\},\) let
\(\underline{c}\in\Gamma(C.)_{n,k}\) and write
\(\underline{c}=(c_1,\ldots,c_{\binom{n}{k}}).\) If \(k=n\) then
\(\Gamma(C.)_{n-1,k}\) is just the zero module, so
\(d_n(\underline{c})=0\). In general, the vector
\(d_n(\underline{c})\in\Gamma(C.)_{n-1,k}\) is an
\(\binom{n-1}{k}\)-tuple. By Theorem~\ref{bigGammathm}(d) each
entry of \(d_i(\underline{c})\) is one of
\(c_1,\ldots,c_{\binom{n}{k}}\); more specifically
\(\binom{n-1}{k}\) elements of \(c_1,\ldots,c_{\binom{n}{k}}\)
occur in \(d_n(\underline{c})\); they occur once and they occur
{\em in order}. To determine which entries do not occur in
\(d_n(\underline{c})\) we look at the \(n^{\rm{th}}\) column of
the table we drew for \((n,k)\). If a \(^*\) occurs in the
\(l^{\rm{th}}\) row then (by Theorem~\ref{bigGammathm}(d)) \(c_l\)
does not occur in \(d_n(\underline{c}).\)

We finally explain how to calculate the face operators other than
\(d_0\) and \(d_n.\) For this and the next paragraph we fix
\(i\in\{1,\ldots,n-1\}\) and \(k\in\{0,\ldots,n\},\) let
\(\underline{c}\in\Gamma(C.)_{n,k}\) and write
\(\underline{c}=(c_1,\ldots,c_{\binom{n}{k}}).\) If \(k=n\) then
\(\Gamma(C.)_{n-1,k}\) is just the zero module, so
\(d_i(\underline{c})=0\). In general, the vector
\(d_i(\underline{c})\in\Gamma(C.)_{n-1,k}\) is an
\(\binom{n-1}{k}\)-tuple. By Theorem~\ref{bigGammathm}(c) each
entry of \(d_i(\underline{c})\) is either one of
\(c_1,\ldots,c_{\binom{n}{k}}\) or the sum of two of them; more
specifically each of \(c_1,\ldots,c_{\binom{n}{k}}\) occur at most
once in \(d_i(\underline{c}),\) either by itself or as part of a
sum, but might not occur at all.

We now proceed in three steps. In the first step we determine
those entries of $d_i(\underline{c})$ that consist of the sum of
two entries of $\underline{c}$ (but not yet the summands). To do
so we look at the column labelled $i-1$ in the table we have drawn
for $(n-1,k)$; if the $l^{\textrm{th}}$ row of that column has a
$\times$ mark in it then the $l^{\textrm{th}}$ entry of
$d_i(\underline{c})$ is the sum of two entries of $\underline{c}$
(by Theorem~\ref{bigGammathm}(c)(ii)). For the second and third
step we look at the column labelled $i$ in the table we have made
for $(n,k)$. In this column there are as many rows with no
$\times$ mark as there are entries of $d_i(\underline{c})$ (by
Proposition~\ref{collatingstuffinDelta}(a)(i)). The second step
now is to write the entries of $\underline{c}$ indexed by the
ordinals of these rows into $d_i(\underline{c})$ {\em in order}.
Still in the same column of the same table there are as many rows
that are marked with a $\times$ but not with a $^*$ as there are
entries of $d_i(\underline{c})$ that contain a sum (by
Proposition~\ref{collatingstuffinDelta}(a)(iii)). The final, third
step is to write the entries of $\underline{c}$ indexed by the
ordinals of these rows {\em in order} into those entries of
$d_i(\underline{c})$ we have identified in the first step to
contain a sum and join them by a plus sign with the entries we
have already made in the second step. This accomplishes
calculating $d_i(\underline{c})$ by Theorem~\ref{bigGammathm}(c).
Finally, it may be worth mentioning that if the $l^{\textrm{th}}$
row (still in the same column of the same table) contains both a
$\times$ mark and a $^*$ mark then $c_l$ does not occur in
$d_i(\underline{c})$.

%
\begin{example}\label{face and degeneracy operator calculation example}
Let \(C\rightarrow B\rightarrow A\) be a chain complex of length
$2$, placed in degrees $0,$ $1$ and $2,$ which has differential
\(\partial\). For $n\ge 0$ let $\Gamma_n := \Gamma(C\rightarrow B
\rightarrow A)_n$. For each \(n\in\{1,2,3,4,5\}\) we calculate all
the degeneracy operator \(s_i:\Gamma_{n-1} \rightarrow
\Gamma_{n}\) and all the face operators \(d_i:\Gamma_{n}
\rightarrow \Gamma_{n-1}\). But first we write write down the
tables as introduced above.


\begin{center}

Table for $(n,k)=(1,1)$:

\begin{tabular}{@{}r@{ }l|c|c|}
  &          &0&1\\
\hline
 1&$(1,1)$&\(\times^*\)&\(\times^*\)\\
\end{tabular}

\end{center}


\begin{center}

{Tables for $(n,k)=(2,1)$ and $(n,k)=(2,2)$}:

\begin{tabular}{@{}r@{ }l|l|l|l|l|l|}
  &         &0         &1         &2           \\
\hline
 1&$(1,2)$&\(\times^*\)&          &\(\times\)\\
 2&$(2,1)$&          &\(\times\)&\(\times^*\)  \\
\end{tabular}
\qquad
\begin{tabular}{@{}r@{ }l|l|l|l|l|l|}
  &          &0        &1           &2\\
  \hline
 1&$(1,1,1)$&\(\times^*\)&\(\times^*\)&\(\times^*\) \\
\end{tabular}

\end{center}

\begin{center}

{Tables for $(n,k)=(3,1)$ and $(n,k)=(3,2)$}:

\begin{tabular}{@{}r@{ }l|l|l|l|l|l|}
  &         &0         &1         &2       &3    \\
  \hline
 1&$(1,3)$&\(\times^*\)&          &          &\(\times\)\\
 2&$(2,2)$&          &\(\times\)&          &\(\times\)  \\
 3&$(3,1)$&          &          &\(\times\)&\(\times^*\)  \\
\end{tabular}
\qquad
\begin{tabular}{@{}r@{ }l|l|l|l|l|l|}
  &          &0        &1           &2           &3\\
  \hline
 1&$(1,1,2)$&\(\times^*\)&\(\times^*\)&            &\(\times\)          \\
 2&$(1,2,1)$&\(\times^*\)&            &\(\times\)  &\(\times^*\)        \\
 3&$(2,1,1)$&          &\(\times\)  &\(\times^*\)&\(\times^*\)        \\
\end{tabular}

\end{center}

\begin{center}

{Tables for $(n,k)=(4,1)$ and $(n,k)=(4,2)$}:

\begin{tabular}{@{}r@{ }l|l|l|l|l|l|}
  &         &0         &1         &2       &3         &4\\
  \hline
 1&$(1,4)$&\(\times^*\)&          &          &          &\(\times\)\\
 2&$(2,3)$&          &\(\times\)&          &          &\(\times\)\\
 3&$(3,2)$&          &          &\(\times\)&          &\(\times\)  \\
 4&$(4,1)$&          &          &          &\(\times\)&\(\times^*\)  \\
\end{tabular}
\qquad
\begin{tabular}{@{}r@{ }l|l|l|l|l|l|}
  &          &0        &1           &2           &3           &4\\
  \hline
 1&$(1,1,3)$&\(\times^*\)&\(\times^*\)&            &            &\(\times\)  \\
 2&$(1,2,2)$&\(\times^*\)&            &\(\times\)  &            &\(\times\)  \\
 3&$(1,3,1)$&\(\times^*\)&            &            &\(\times\)  &\(\times^*\)\\
 4&$(2,1,2)$&          &\(\times\)&\(\times^*\)&            &\(\times\)  \\
 5&$(2,2,1)$&          &\(\times\)  &            &\(\times\)  &\(\times^*\)\\
 6&$(3,1,1)$&          &            &\(\times\)  &\(\times^*\)&\(\times^*\)\\
\end{tabular}

\end{center}

\begin{center}

{Tables for $(n,k)=(5,1)$ and $(n,k)=(5,2)$:}

\begin{tabular}{@{}r@{ }l|l|l|l|l|l|l|}
  &       &0         &1         &2         &3         &4         &5\\
  \hline
 1&$(1,5)$&\(\times^*\)&          &          &          &          &\(\times\)\\
 2&$(2,4)$&          &\(\times\)&          &          &          &\(\times\)\\
 3&$(3,3)$&          &          &\(\times\)&          &          &\(\times\)\\
 4&$(4,2)$&          &          &          &\(\times\)&          &\(\times\)\\
 5&$(5,1)$&          &          &          &          &\(\times\)&\(\times^*\)\\
\end{tabular}
\qquad
\begin{tabular}{@{}r@{ }l|l|l|l|l|l|l|}
  &       &0         &1           &2           &3           &4         &5   \\
  \hline
 1&(1,1,4)&\(\times^*\)&\(\times^*\)&            &            &            &\(\times\)  \\
 2&(1,2,3)&\(\times^*\)&            &\(\times\)  &            &            &\(\times\)  \\
 3&(1,3,2)&\(\times^*\)&            &            &\(\times\)  &            &\(\times\)  \\
 4&(1,4,1)&\(\times^*\)&            &            &            &\(\times\)  &\(\times^*\)\\
 5&(2,1,3)&          &\(\times\)  &\(\times^*\)&            &            &\(\times\)  \\
 6&(2,2,2)&          &\(\times\)  &            &\(\times\)  &            &\(\times\)  \\
 7&(2,3,1)&          &\(\times\)  &            &            &\(\times\)  &\(\times^*\)\\
 8&(3,1,2)&          &            &\(\times\)  &\(\times^*\)&            &\(\times\)  \\
 9&(3,2,1)&          &            &\(\times\)  &            &\(\times\)  &\(\times^*\)\\
10&(4,1,1)&          &            &            &\(\times\)  &\(\times^*\)&\(\times^*\)\\
\end{tabular}
\end{center}

The face and degeneracy operators between $\Gamma_0 = A$ and
$\Gamma_1 = B \oplus A$ act as follows.
\[d_i((b;\,\,a)) = \left\{ \begin{array}{ll}\partial(b)+a & \textrm{for }i=0\\
a& \textrm{for } i=1\end{array}\right.\]
\[s_0(a) = (0;\,\,a)\]

The face and degeneracy operators between $\Gamma_1=B\oplus A$ and
$\Gamma_2 = C\oplus B^2\oplus A$ act as follows.
\[ d_i(c;\,\,b_1,b_2;\,\,a)) = \left\{ \begin{array}{lll}
(\partial(c)+b_2;\,\, \partial (b_1) +a) & \textrm{for } i=0\\
(b_1+b_2;\,\,a) & \textrm{for } i=1\\
(b_1;\,\,a)& \textrm{for } i=2
\end{array}\right.\]
\[s_i((b;\,\,a))= \left\{\begin{array}{ll}
(0,b;\,\,a) & \textrm{for } i=0\\
(b,0;\,\,a) & \textrm{for } i=1
\end{array} \right. \]

The face and degeneracy operators between $\Gamma_2=C\oplus
B^2\oplus A$ and $\Gamma_3=C^3\oplus B^3\oplus A$ act as follows.
\[d_i((c_1,c_2,c_3;\,\,b_1,b_2,b_3;\,\,a))= \left\{\begin{array}{ll}
(c_3,\partial(c_1)+b_2,\partial(c_2)+b_3;\,\,\partial(b_1)+a) &
\textrm{for } i=0\\
(c_2+c_3;\,\,b_1+b_2,b_3;\,\,a) & \textrm{for } i=1\\
(c_1+c_2;\,\,b_1,b_2+b_3;\,\,a) & \textrm{for } i=2\\
(c_1;\,\,b_1,b_2;\,\,a) & \textrm{for } i=3
\end{array}\right.\]
\[s_i((c;\,\,b_1,b_2;\,\,a)) = \left\{\begin{array}{ll}
(0,0,c;\,\,0,b_1,b_2;\,\,a) & \textrm{for } i=0\\
(0,c,0;\,\,b_1,0,b_2;\,\,a) & \textrm{for } i=1\\
(c,0,0;\,\,b_1,b_2,0;\,\,a) & \textrm{for } i=2
\end{array}\right.  \]

The face and degeneracy operators between $\Gamma_3=C^3\oplus
B^3\oplus A$ and $\Gamma_4 =C^6\oplus B^4\oplus A$ act as follows.
\begin{eqnarray*}\lefteqn{d_i((c_1,c_2,c_3,c_4,c_5,c_6;\,\,b_1,b_2,b_3,b_4;\,\,a))
}\\ &=&\left\{\begin{array}{ll}
(c_4,c_5,c_6;\,\,\partial(c_1)+b_2,\partial(c_2)+b_3,\partial(c_3)+b_4;\,\,\partial(b_1)+a)
& \textrm{for } i=0\\
(c_2+c_4,c_3+c_5,c_6;\,\,b_1+b_2,b_3,b_4;\,\,a) & \textrm{for } i=1\\
(c_1+c_2,c_3,c_5+c_6;\,\,b_1,b_2+b_3,b_4;\,\,a) & \textrm{for } i=2\\
(c_1,c_2+c_3,c_4+c_5;\,\,b_1,b_2,b_3+b_4;\,\,a) & \textrm{for } i=3\\
(c_1,c_2,c_4;\,\,b_1,b_2,b_3;\,\,a) & \textrm{for } i=4
\end{array}  \right.  \end{eqnarray*}
\[s_i((c_1,c_2,c_3;\,\,b_1,b_2,b_3;\,\,a)) = \left\{\begin{array}{ll}
(0,0,0,c_1,c_2,c_3;\,\,0,b_1,b_2,b_3;\,\,a) & \textrm{for } i=0\\
(0,c_1,c_2,0,0,c_3;\,\,b_1,0,b_2,b_3;\,\,a) & \textrm{for } i=1\\
(c_1,0,c_2,0,c_3,0;\,\,b_1,b_2,0,b_3;\,\,a) & \textrm{for } i=2\\
(c_1,c_2,0,c_3,0,0;\,\,b_1,b_2,b_3,0;\,\,a) & \textrm{for } i=3
\end{array} \right. \]

The face and degeneracy operators between $\Gamma_4 =C^6\oplus
B^4\oplus A$ and $\Gamma_5=C^{10}\oplus B^5\oplus A$ act as follows.
\begin{eqnarray*}\lefteqn{d_i((c_1,c_2,c_3,c_4,c_5,c_6,c_7,c_8,c_9,c_{10};\,\,b_1,b_2,b_3,b_4,b_5;\,\,a))}\\
&=&\left\{\begin{array}{ll}
(c_5,c_6,c_7,c_8,c_9,c_{10};\,\,\partial(c_1)+b_2,\partial(c_2)+b_3,\partial(c_3)+b_4,\partial(c_4)+b_5;\,\,\partial(b_1)+a)
\\
(c_2+c_5,c_3+c_6,c_4+c_7,c_8,c_9,c_{10};\,\,b_1+b_2,b_3,b_4,b_5;\,\,a) \\
(c_1+c_2,c_3,c_4,c_6+c_8,c_7+c_9,c_{10};\,\,b_1,b_2+b_3,b_4,b_5;\,\,a) \\
(c_1,c_2+c_3,c_4,c_5+c_6,c_7,c_9+c_{10};\,\,b_1,b_2,b_3+b_4,b_5;\,\,a) \\
(c_1,c_2,c_3+c_4,c_5,c_6+c_7,c_8+c_9;\,\,b_1,b_2,b_3,b_4+b_5;\,\,a)  \\
(c_1,c_2,c_3,c_5,c_6,c_8;\,\,b_1,b_2,b_3,b_4;\,\,a)
\end{array} \right.
\end{eqnarray*}
\begin{eqnarray*} \lefteqn{s_i((c_1,c_2,c_3,c_4,c_5,c_6;\,\,b_1,b_2,b_3,b_4;\,\,a))}\\
&=&\left\{\begin{array}{ll}
(0,0,0,0,c_1,c_2,c_3,c_4,c_5,c_6;\,\,0,b_1,b_2,b_3,b_4;\,\,a) &
\textrm{for
} i=0\\
(0,c_1,c_2,c_3,0,0,0,c_4,c_5,c_6;\,\,b_1,0,b_2,b_3,b_4;\,\,a) &
\textrm{for
} i=1\\
(c_1,0,c_2,c_3,0,c_4,c_5,0,0,c_6;\,\,b_1,b_2,0,b_3,b_4;\,\,a) &
\textrm{for
} i=2\\
(c_1,c_2,0,c_3,c_4,0,c_5,0,c_6,0;\,\,b_1,b_2,b_3,0,b_4;\,\,a) &
\textrm{for
} i=3\\
(c_1,c_2,c_3,0,c_4,c_5,0,c_6,0,0;\,\,b_1,b_2,b_3,b_4,0;\,\,a) &
\textrm{for } i=4
\end{array}  \right. \end{eqnarray*}

\end{example}

\section{Cross-effect Functors}\label{crosstheory}

In this section we summarize some definitions and results about
cross-effect functors  that are relevant to our work; see
\cite{EM} for proofs and more details.

Recall a functor \(G:\mathcal{A}\rightarrow\mathcal{B}\) between
abelian categories is called linear if, for any sequence $A_1,
\ldots, A_n$ of objects in \(\mathcal{A}\), we have the relation
\(G(\oplus_{i=1}^nA_i)=\oplus_{i=1}^nG(A_i)\) in~$\mathcal{B}$.
The main result of the theory of cross-effect functors
(Theorem~\ref{crossdecomposition}) gives us an analogous
decomposition for any {\em nonlinear} functor
\(F:\mathcal{A}\rightarrow\mathcal{B}\) with the property that
\(F(0_\mathcal{A})=0_{\mathcal{B}}\). This decomposition we get in
\(\mathcal{B}\) has a term for each subsum of the original sum in
\(\mathcal{A}\) (rather than for each summand as with a linear
functor). The terms of this sum in \(\mathcal{B}\) are given by
cross-effect functors of \(F\).


For the rest of this section we let \(F:\mathcal{A}\rightarrow\mathcal{B}\)
be a functor between an additive category \(\mathcal{A}\) and an
abelian category \(\mathcal{B}\) with \(F(0_\mathcal{A})=0_\mathcal{B}\).
The condition \(F(0_\mathcal{A})=0_\mathcal{B}\) is equivalent to the condition that
the image of any zero homomorphism in \(\mathcal{A}\) under \(F\) is a zero homomorphism in \(\mathcal{B}\).

\begin{defn}
For \(f_1,\ldots,f_n\in\Hom(A,B)\) we define the morphism
\(F(f_1\dev\ldots\dev f_n)\in \Hom(F(A),F(B))\) by the following
equation:
\[F(f_1\dev\ldots\dev f_n)=\sum_{k=1}^n\sum_{j_1<\ldots<j_k}(-1)^{n-k}F(f_{j_1}+\ldots+f_{j_k})\text{.}\]
\end{defn}

The function \(F(-\dev\ldots\dev -)\) has the following properties.
For each permutation \(\pi\) of \(\{1,\ldots,n\}\)
we have \(F(f_1\dev\ldots\dev f_n)=F(f_{\pi(1)}\dev\ldots\dev f_{\pi(n)}).\)
Whenever any of the functions \(f_i\) are zero we get \(F(f_1\dev\ldots\dev
f_n)=0\).
The function \(F(-\dev\ldots\dev -)\) is linear in each argument.
By rearranging the definition we get the relation
\(F(f_1+\ldots+f_n)=\sum_{k=1}^n\sum_{j_1<\ldots<j_k}F(f_{j_1}\dev\ldots\dev
f_{j_k})\).

\begin{note} Let \(A=A_1\oplus\ldots \oplus A_n\) be a direct sum in the
additive category \(\mathcal{A}.\) For each non-empty subset
\(\alpha=\{j_1<\ldots<j_k\}\) of \(\{1,\ldots,n\}\) and each
\(j\in\alpha\) we write \(A^\alpha\) for
\(\bigoplus_{l\in\alpha}A_l\), \(i^\alpha\) for the canonical
injection \(A^\alpha\rightarrow A\), \(p^\alpha\) for the
canonical projection \(A\rightarrow A^\alpha\), \(\psi^\alpha_j\)
for the map \(A^\alpha\rightarrow A^\alpha\),
\((a_{j_1},\ldots,a_{j_k})\mapsto (0,\ldots,0,a_j,0\ldots,0)\) and
just \(\psi_j\) if \(\alpha=\{1,\ldots,n\}.\) We also write
\((A_j,j\in\alpha)\) for the tuple \((A_{j_1},\ldots,A_{j_k})\).
\end{note}

\begin{defn}\label{cross}
The \(n^{th}\) cross-effect of \(F\) is a functor \(\mathcal{A}^n\rightarrow\mathcal{B}\).
It acts on objects by
\[
\cross_n(F)(A_1,\ldots,A_n)=F(\psi_1\dev\ldots\dev\psi_n)F(A_1\oplus\ldots \oplus A_n)\text{.}
\]
For the collection of morphisms \(f_l:A_l\rightarrow B_l\), \(
1\le l\le n\), the morphism
\[
\cross_n(F)(f_1,\ldots,f_n):
\cross_n(F)(A_1,\ldots,A_n)\rightarrow\cross_n(F)(B_1,\ldots,B_n)
\]
is induced by  $ F(f_1\oplus\ldots\oplus f_n): F(A_1 \oplus \ldots
\oplus A_n)\rightarrow F(B_1\oplus\ldots \oplus B_n)$.
\end{defn}

Definition~\ref{cross} is a technical definition of cross-effect
functors that does not really give much intuition about how one
should think of them. It is better to think of cross-effect
functors as the terms of a direct-sum decomposition as given in
Theorem~\ref{crossdecomposition} below; Theorem~\ref{crosschar}
gives us the justification of this mental picture. In a sense
Theorem~\ref{crosschar} is a converse of
Theorem~\ref{crossdecomposition}, because it says that if we have
an appropriate collection of functors which give a decomposition
of \(G(\bigoplus_{i=1}^n A)\) then they are (up to isomorphism)
the cross-effect functors of $G$.

\begin{thm}\label{crossdecomposition}

Let $A_1, \ldots, A_n \in \mathcal{A}$. The maps
\[\cross_{|\alpha|}(F)(A_j, j\in \alpha) \subseteq F(\oplus_{j\in
\alpha} A_j) \,\, \stackrel{F(i^\alpha)}{\longrightarrow}\,\,
F(A_1 \oplus \ldots \oplus A_n), \quad \alpha \subseteq \{1,
\ldots,n\},\] induce the following direct-sum decomposition of
$F(A_1 \oplus \ldots \oplus A_n)$:
\[\bigoplus_{\alpha \subseteq \{1, \ldots, n\}}
\cross_{|\alpha|}(F)(A_j, j\in \alpha) \cong F(A_1 \oplus \ldots
\oplus A_n);\] here, for each subset $\alpha = \{j_1 < \ldots
<j_{|\alpha|}\}$ of $\{1, \ldots, n\}$, the direct summand of the
left-hand side indexed by $\alpha$ corresponds to the sub-object
$F(\psi_{j_1} \dev \ldots \dev \psi_{j_{|\alpha|}}) F(A_1 \oplus
\ldots \oplus A_n)$ of the right-hand side.
\end{thm}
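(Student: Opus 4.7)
The plan is to build a complete system of orthogonal idempotents on $F(A_1 \oplus \ldots \oplus A_n)$ indexed by subsets $\alpha \subseteq \{1, \ldots, n\}$ and then identify the image of each idempotent with the corresponding cross-effect functor. For every non-empty $\alpha = \{j_1 < \ldots < j_k\}$, set $e_\alpha := F(\psi_{j_1} \dev \ldots \dev \psi_{j_k}) \in \End(F(A_1 \oplus \ldots \oplus A_n))$, and for $T \subseteq \{1, \ldots, n\}$ write $\phi_T := \sum_{j \in T} \psi_j = i^T p^T$. The basic input is the composition law $\phi_T \circ \phi_{T'} = \phi_{T \cap T'}$ (straightforward) together with functoriality $F(\phi_T) F(\phi_{T'}) = F(\phi_{T \cap T'})$.

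First I would establish that the $e_\alpha$ form a complete system of orthogonal idempotents summing to $\id_{F(A_1 \oplus \ldots \oplus A_n)}$. The identity $\sum_{\emptyset \neq \alpha} e_\alpha = \id$ is immediate from the rearrangement formula applied to $\psi_1 + \ldots + \psi_n = \id$. For orthogonality, I would expand
\[e_\alpha e_\beta = \sum_{\emptyset \neq T \subseteq \alpha, \, \emptyset \neq T' \subseteq \beta} (-1)^{|\alpha|+|\beta|-|T|-|T'|} F(\phi_{T \cap T'}),\]
group terms according to $S := T \cap T'$, and parametrise $T = S \cup U$, $T' = S \cup V$ with $U \subseteq \alpha \setminus S$, $V \subseteq \beta \setminus S$, $U \cap V = \emptyset$. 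A short inclusion–exclusion on each element $x \in (\alpha \cup \beta) \setminus S$ shows the sign sum vanishes whenever $\alpha \setminus S \neq \beta \setminus S$, which forces $\alpha = \beta$; the remaining case reduces to $e_\alpha^2 = e_\alpha$ after collecting signs. I expect this combinatorial book-keeping to be the one real obstacle, but it is routine once the reduction to disjoint $U, V$ is in place.

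Having the orthogonal idempotents, the direct-sum decomposition $F(A_1 \oplus \ldots \oplus A_n) = \bigoplus_{\emptyset \neq \alpha} \Image(e_\alpha)$ is automatic. It only remains to identify $\Image(e_\alpha)$ with $\cross_{|\alpha|}(F)(A_j, j \in \alpha) = \Image(e^\alpha)$, where $e^\alpha := F(\psi^\alpha_{j_1} \dev \ldots \dev \psi^\alpha_{j_k}) \in \End(F(A^\alpha))$. Using $i^\alpha \psi^\alpha_j = \psi_j i^\alpha$ and $\psi^\alpha_j p^\alpha = p^\alpha \psi_j$ for $j \in \alpha$ (and $p^\alpha \psi_j = 0$ otherwise), the multilinearity/rearrangement of $F(- \dev \ldots \dev -)$ gives the intertwining relations
\[F(i^\alpha) \circ e^\alpha = e_\alpha \circ F(i^\alpha), \qquad F(p^\alpha) \circ e_\alpha = e^\alpha \circ F(p^\alpha).\]
Since $p^\alpha i^\alpha = \id_{A^\alpha}$ yields $F(p^\alpha) F(i^\alpha) = \id$, and since $F(i^\alpha) F(p^\alpha) = F(\phi_\alpha) = \sum_{\emptyset \neq \beta \subseteq \alpha} e_\beta$ restricts to $e_\alpha = \id$ on $\Image(e_\alpha)$ by orthogonality, the pair $F(i^\alpha), F(p^\alpha)$ restricts to mutually inverse isomorphisms $\Image(e^\alpha) \cong \Image(e_\alpha)$. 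Summing over $\alpha$ and noting the $\alpha = \emptyset$ summand is zero (because $F(0) = 0$) yields the decomposition in the stated form, with the summand indexed by $\alpha$ corresponding to the sub-object $F(\psi_{j_1} \dev \ldots \dev \psi_{j_{|\alpha|}}) F(A_1 \oplus \ldots \oplus A_n)$, as required.
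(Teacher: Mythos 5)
Your proof is correct. Note that the paper does not supply its own proof of this result: Section~3 explicitly refers the reader to~\cite{EM} for proofs, and Theorem~\ref{crossdecomposition} is stated without argument. Your route---building the complete system of orthogonal idempotents $e_\alpha = F(\psi_{j_1}\dev\ldots\dev\psi_{j_{|\alpha|}})$ on $F(A_1\oplus\ldots\oplus A_n)$, deriving orthogonality from $\phi_T\phi_{T'}=\phi_{T\cap T'}$ together with an inclusion--exclusion sign cancellation, and then identifying $\Image(e_\alpha)$ with $\cross_{|\alpha|}(F)(A_j,j\in\alpha)$ via the $F(i^\alpha)$, $F(p^\alpha)$ intertwining---is the standard argument found in the cited source and its descendants, so your proof is consistent with what the paper points to. The details all check out: the sign sum $\sum_{U,V}(-1)^{|U|+|V|}$ over disjoint $U\subseteq\alpha\setminus S$, $V\subseteq\beta\setminus S$ factorizes over elements of $(\alpha\cup\beta)\setminus S$ and vanishes unless $\alpha\setminus S=\beta\setminus S$, which (since $S\subseteq\alpha\cap\beta$) forces $\alpha=\beta$; the intertwining relations follow termwise from $i^\alpha\phi^\alpha_T=\phi_T i^\alpha$ and $\phi^\alpha_T p^\alpha=p^\alpha\phi_T$ by functoriality; and $F(i^\alpha)F(p^\alpha)=F(\phi_\alpha)=\sum_{\emptyset\ne\beta\subseteq\alpha}e_\beta$ restricts to the identity on $\Image(e_\alpha)$ by the orthogonality you have just established.
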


Cross-effect functors also have the following properties.
Whenever any of the objects \(A_j\) for \(j\in\{1,\ldots,n\}\) is the zero object
then the cross-effect module \(\cross_n(F)(A_1, \ldots, A_n)\) is also the zero object.
For each permutation \(\pi\) of \(\{1,\ldots,n\}\)
we get a natural isomorphism
\(
\cross_n(F)(A_1, \ldots, A_n)\cong\cross_n(F)(A_{\pi(1)}, \ldots, A_{\pi(n)})
.\)

\begin{defn}
If \(\cross_n(F)\) is the zero functor then we say that
\emph{\(F\) is a functor of degree less than \(n\)}. In this case
$F$  is also  of degree less than \(m\) for any $m>n$. Because of
this $F$ has a well-defined \emph{degree}.  The degree of $F$ is
either a non-negative integer or infinity.
\end{defn}

The following theorem gives us a characterization of the
cross-effect functors of \(F\) by their appearance in a direct-sum
decomposition as in Theorem~\ref{crossdecomposition}.

\begin{thm}\label{crosschar}
For each subset \(\alpha\) of $\{1, \ldots, n\}$ let \(E_\alpha\)
be a covariant functor between \(\mathcal{A}^{|\alpha|}\) and
\(\mathcal{B}\), which is zero when any of its arguments is zero.
If we have a natural isomorphism
\[
h:\bigoplus_{\alpha\subset\{1,\ldots,n\}}E_\alpha(A_j,j\in\alpha)\cong F(A_1\oplus\ldots \oplus A_n)
\]
then \(h\) maps each \(E_\alpha(A_j,j\in\alpha)\) isomorphically
to
\(F(\psi_{j_1}\dev\ldots\dev\psi_{j_{|\alpha|}})F(A_1\oplus\ldots\oplus
A_n)\). In particular we get a natural isomorphism $E_\alpha \cong
\cross_{|\alpha|}(F)$.

\end{thm}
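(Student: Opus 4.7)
The plan is to show that $h$ takes each summand $E_\alpha(A_j, j\in \alpha)$ isomorphically onto the subobject $F(\psi_{j_1}\dev \ldots \dev \psi_{j_{|\alpha|}}) F(A_1 \oplus \ldots \oplus A_n)$ by exploiting naturality of $h$ together with the inclusion-exclusion formula defining $F(-\dev\ldots\dev-)$. Once this is done, the definition of $\cross_{|\alpha|}(F)$ immediately gives $E_\alpha \cong \cross_{|\alpha|}(F)$, and the restriction of $h$ to each summand provides the required natural isomorphism.

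First I would analyse how the endomorphism $\psi^\beta := \sum_{j\in\beta} \psi_j$ of $A:=A_1\oplus\ldots\oplus A_n$ acts through $h$, for an arbitrary subset $\beta\subseteq\{1,\ldots,n\}$. The morphism $\psi^\beta$ comes from the tuple of morphisms $(f_j)_{j=1}^n$ with $f_j = \id_{A_j}$ for $j\in\beta$ and $f_j = 0$ for $j\notin\beta$. Naturality of $h$ in all $n$ arguments then gives a commutative square in which the right vertical arrow is $F(\psi^\beta)$ and the left vertical arrow is $\bigoplus_\alpha E_\alpha(f_j, j\in\alpha)$. Since $E_\alpha$ is zero whenever one of its arguments is zero (so that the induced endomorphism through that zero object is zero), this endomorphism of the direct sum is the identity on those summands with $\alpha\subseteq\beta$ and zero on the remaining summands. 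In other words, under the identification $h$, the operator $F(\psi^\beta)$ is exactly the projection $\pi_\beta$ onto $\bigoplus_{\alpha\subseteq\beta} E_\alpha(A_j,j\in\alpha)$.

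Next I would fix $\alpha_0=\{j_1<\ldots<j_k\}$ and combine this with the definition
\[
F(\psi_{j_1}\dev\ldots\dev\psi_{j_k}) = \sum_{\emptyset\ne\beta\subseteq\alpha_0} (-1)^{k-|\beta|} F(\psi^\beta).
\]
Substituting the identification of $F(\psi^\beta)$ with $\pi_\beta$, the coefficient of the identity on the summand $E_\gamma(A_j, j\in\gamma)$ is
\[
\sum_{\substack{\gamma\subseteq\beta\subseteq\alpha_0\\ \beta\ne\emptyset}} (-1)^{k-|\beta|}.
\]
A standard binomial computation (summing $(-1)^{|\delta|}$ over $\delta\subseteq\alpha_0\setminus\gamma$) shows this coefficient vanishes unless $\gamma=\alpha_0$, in which case it equals $1$; for $\gamma\not\subseteq\alpha_0$ the coefficient is trivially zero because every $\pi_\beta$ kills $E_\gamma$. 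Therefore, via $h$, the operator $F(\psi_{j_1}\dev\ldots\dev\psi_{j_k})$ corresponds exactly to the projection of the direct-sum decomposition onto the single summand $E_{\alpha_0}(A_j, j\in \alpha_0)$.

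Finally I would conclude: the image of this projection equals $h\bigl(E_{\alpha_0}(A_j,j\in\alpha_0)\bigr)$, while by definition this image equals $\cross_{|\alpha_0|}(F)(A_j,j\in\alpha_0)$. Hence $h$ restricts to an isomorphism $E_{\alpha_0}(A_j,j\in\alpha_0)\cong \cross_{|\alpha_0|}(F)(A_j,j\in\alpha_0)$, and the naturality in each $A_j$ of this restricted isomorphism is inherited from that of $h$, so we obtain a natural isomorphism $E_\alpha\cong \cross_{|\alpha|}(F)$. The step I expect to be the main obstacle is verifying cleanly that $E_\gamma$ really does vanish on morphisms with a zero component (so that the naturality square in Step~1 yields a genuine projection), since the hypothesis is stated only for objects; but this follows because any morphism with a zero component factors through a zero object, on which $E_\gamma$ is zero.
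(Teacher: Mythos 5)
The paper does not actually prove Theorem~\ref{crosschar}: at the start of Section~\ref{crosstheory} it explicitly refers the reader to \cite{EM} for the proofs of the cross-effect results it summarizes. So there is no ``paper's own proof'' to compare against, and your proposal must be judged as a standalone argument. On that basis, your argument is sound and is essentially the standard proof: naturality of $h$ in all $n$ variables, applied to the morphisms with components $\id_{A_j}$ for $j\in\beta$ and $0$ for $j\notin\beta$, identifies $F(\psi^\beta)$ with the idempotent projecting onto $\bigoplus_{\alpha\subseteq\beta}E_\alpha(A_j,j\in\alpha)$; the inclusion--exclusion definition of $F(-\dev\cdots\dev-)$ then exhibits $F(\psi_{j_1}\dev\cdots\dev\psi_{j_{|\alpha_0|}})$ as the projection onto the single summand $E_{\alpha_0}(A_j,j\in\alpha_0)$, and comparing images gives the desired isomorphism. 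Your observation about morphisms with a zero component factoring through a zero object is exactly the right way to turn the hypothesis on objects into one on morphisms.

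One small inaccuracy deserves a patch. Your binomial-coefficient computation --- summing $(-1)^{|\delta|}$ over $\delta\subseteq\alpha_0\setminus\gamma$ --- is only valid when $\gamma\ne\emptyset$; that is what guarantees that the constraint $\beta\ne\emptyset$ is automatic. For $\gamma=\emptyset$ the constraint $\beta\ne\emptyset$ bites, and the coefficient on the $E_\emptyset$ summand works out to $(-1)^{|\alpha_0|+1}\ne 0$, not $0$. This does not invalidate the argument, because $E_\emptyset$ is forced to be the zero object: set all $A_j=0$, use $F(0_\mathcal{A})=0_\mathcal{B}$ and the hypothesis that each $E_\alpha$ with $\alpha\ne\emptyset$ vanishes when an argument is zero, and the isomorphism $h$ collapses to $E_\emptyset\cong 0$. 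You should state this explicitly, since it is precisely the summand on which your coefficient computation breaks down.
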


\section{Expressing Dold-Puppe complexes in terms of cross-effect modules}\label{honourabilitynstuff}

Let \(\mathcal{A}\) be an abelian category. Previously we have
worked with the functor
\(\Gamma:\Ch_{\ge0}\mathcal{A}\rightarrow\mathcal{SA}\), now we
introduce its inverse
\(N:\mathcal{SA}\rightarrow\Ch_{\ge0}\mathcal{A}\). Let \(X.\) be
a simplicial object in \(\mathcal{A}.\) The normalized chain
complex \(N(X.)\) of $X.$ is given by
\[N(X.)_n:=X_n \Bigg/ \sum_{i=0}^{n-1}\Image s_i\text{,}\]
with its differential induced by the alternating sum of the face maps of \(X.:\)
\[
\partial=\sum^n_{i=0}(-1)^id_i :X_n\rightarrow X_{n-1}
\]
(for \(n\ge0\)). An important application of the Dold-Kan
correspondence is the construction of Dold-Puppe complexes, i.e.\
complexes of the form \(NF\Gamma(C.)\)  where \(C.\) is a chain
complex and \(F:\mathcal{A}\rightarrow\mathcal{B}\) is a functor
between abelian categories (that has been extended to the category
\(\mathcal{SA}\) in the obvious way).

In \cite{Ko1} the first-named author uses cross-effect functors to
give a description of the Dold-Puppe complex of a chain complex
\(C.={(P\rightarrow Q)}\) of length one (i.e.\ \(C_n=0\) when
\(n>1\)) in the category \(\Ch_{\ge 0}(\mathcal{A})\). Lemma~2.2
of \cite{Ko1} proves that
\[
NF\Gamma(P\rightarrow Q)_n\cong \cross_n(F)(P,\ldots,P) \oplus \cross_{n+1}(F)(Q,P,\ldots ,P)
\]
and gives an explicit description of the differential. The aim of
this section is to generalise this result and give a similar
description of Dold-Puppe complexes in terms of cross-effect
functors when the original complex is longer.

For the rest of this section we fix a functor $F: \mathcal{A}
\rightarrow \mathcal{B}$ from an additive category $\mathcal{A}$
to an abelian category $\mathcal{B}$ with the property that
$F(0_\mathcal{A}) = 0_\mathcal{B}$, we fix a chain complex $C.$ in
$\mathcal{A}$ and we fix a positive integer $n$.

The following definition introduces another way of denoting
elements of \(\Sur([n],[k])\), which will be easier to deal with
the problems in this section.

\begin{defn}\label{triangle}
Let $\mathcal{P}_n$ denote the set of subsets of $\{0, 1, \ldots,
n-1\}$. We define a bijective map $^\triangle$ as follows:
\begin{eqnarray*}
^\triangle: \amalg_{k=0}^n \Sur([n],[k]) & \rightarrow &
\mathcal{P}_n \\
\mu \in \Sur([n],[k]) & \mapsto & \mu^\triangle := \{ \max
\mu^{-1}(0), \ldots, \max \mu^{-1}(k-1) \}
\end{eqnarray*}
where $\max$ is the function that gives the maximum element of a
set. For each $k \in \{0, \ldots, n\}$, we use the symbol
$^\triangle$ also for the induced bijection between
$\Sur([n],[k])$ and the set $\mathcal{P}_n^k$ of subsets of $\{0,
\ldots, n-1\}$ of cardinality $k$.
\end{defn}

Note that we have omitted $\max \mu^{-1}(k)$ in the list of
elements of $\mu^\triangle$ because $\max \mu^{-1}(k)$ is always
equal to $n$. For every $0 \le i \le n-2$, the partition $\mu^*$
obviously begins with a partition of $i+1$ (in the sense of
Definition~\ref{initial}) if and only if $i \in \mu^\triangle$. We
will be using this observation extensively when we refer to
results of Section \ref{fdinGamma}.

\begin{defn}
We say that a subset \(\alpha\) of the disjoint union
\(\amalg_{k=0}^{n}\Sur([n],[k])\) is \emph{honourable} if
\(\cup_{\mu\in\alpha}\, \mu^\triangle = \{0,1,\ldots,n-1\}\).
\end{defn}

\begin{note}
Let \(\alpha \subset \amalg_{k=0}^{n}\Sur([n],[k]).\) For each
\(k\in\{0,\ldots,n\}\) we write \(\alpha_k\) for the intersection
\(\alpha\cap\Sur([n],[k])\).
For \(C_0,\ldots,C_n\in\mathcal{A}\) we write
\((C_{0,\alpha_0},\ldots,C_{n,\alpha_n})\) for the following
\(|\alpha|\text{-tuple}:\)
\[(\underbrace{C_{0},\ldots,C_{0}}_{|\alpha_0| \textrm{ times}},\,\,
\ldots,\,\,\underbrace{C_n,\ldots,C_n}_{|\alpha_n| \textrm{ times}}).\]
\end{note}

\begin{prop}\label{hon not degen}
We have a canonical isomorphism
\[
NF\Gamma(C.)_n\cong
\bigoplus_{\alpha\subset\amalg_{k=0}^n\Sur([n],[k]),\,\,\text{\rm{\(\alpha\)
is
honourable}}}\cross_{|\alpha|}(F)(C_{0,\alpha_0},\ldots,C_{n,\alpha_n})\text{.}
\]
\end{prop}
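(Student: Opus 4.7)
The strategy is to decompose the numerator $F\Gamma(C.)_n$ using the cross-effect theorem (Theorem~\ref{crossdecomposition}), then to identify the denominator $\sum_{i=0}^{n-1}\Image F(s_i)$ as a direct sum of cross-effect terms, and finally to read off the quotient. Since
\[\Gamma(C.)_n \;=\; \bigoplus_{k=0}^n \bigoplus_{\mu\in\Sur([n],[k])} C_k\]
is a direct sum with summands indexed by $\amalg_{k=0}^n\Sur([n],[k])$, Theorem~\ref{crossdecomposition} immediately yields
\[F\Gamma(C.)_n \;\cong\; \bigoplus_{\alpha\subset \amalg_{k=0}^n\Sur([n],[k])} \cross_{|\alpha|}(F)(C_{0,\alpha_0},\ldots,C_{n,\alpha_n}).\]

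The central step is to identify, inside this decomposition, the image of each degeneracy $F(s_i)$ for $i\in\{0,\ldots,n-1\}$. Writing $T_i:=\amalg_k S^{n,k}_i\subset \amalg_k\Sur([n],[k])$, Proposition~\ref{sur=Imgammaunionsin} gives the splitting $\Sur([n],[k])=S^{n,k}_i\amalg \Image\overline{\sigma}_i$ for each $k$, and the description of $s_i$ recalled at the start of Section~\ref{fdinGamma} shows that $s_i$ is an isomorphism from $\Gamma(C.)_{n-1}$ onto the direct summand
\[M_i \;:=\; \bigoplus_k\bigoplus_{\mu\in\Image\overline{\sigma}_i} C_k\]
of $\Gamma(C.)_n$. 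Applying Theorem~\ref{crossdecomposition} to $F(M_i)$ separately gives a cross-effect decomposition of $F(M_i)$ indexed by subsets $\alpha\subset T_i^c$; by naturality the inclusion $F(M_i)\hookrightarrow F\Gamma(C.)_n$ sends each such cross-effect summand of $F(M_i)$ onto the corresponding cross-effect summand of $F\Gamma(C.)_n$ (this can be checked directly from the defining formula for $\cross_{|\alpha|}(F)$, or deduced from the uniqueness part of Theorem~\ref{crosschar}). Hence
\[\Image F(s_i) \;=\; \bigoplus_{\alpha\cap T_i=\emptyset} \cross_{|\alpha|}(F)(C_{0,\alpha_0},\ldots,C_{n,\alpha_n}),\]
so that $\sum_{i=0}^{n-1}\Image F(s_i)$ is the direct sum over all $\alpha$ such that $\alpha\cap T_i=\emptyset$ for at least one $i$.

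It remains to translate the surviving condition ``$\alpha\cap T_i\ne\emptyset$ for every $i$'' into honourability. By the observation following Definition~\ref{triangle}, a surjection $\mu\in\Sur([n],[k])$ lies in $S^{n,k}_i$ precisely when $i\in\mu^\triangle$; hence $\alpha\cap T_i\ne\emptyset$ for every $i\in\{0,\ldots,n-1\}$ if and only if $\bigcup_{\mu\in\alpha}\mu^\triangle=\{0,\ldots,n-1\}$, i.e.\ $\alpha$ is honourable. Quotienting $F\Gamma(C.)_n$ by $\sum_i\Image F(s_i)$ therefore leaves precisely the direct sum over honourable $\alpha$, as claimed. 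The main obstacle in this plan is the identification of $\Image F(s_i)$ with the subsum of cross-effect terms indexed by $\alpha\subset T_i^c$: because $F$ is nonlinear it need not preserve subobjects, so this compatibility has to be extracted from the fact that $M_i\hookrightarrow\Gamma(C.)_n$ is a \emph{split} direct-summand inclusion (coming from Proposition~\ref{sur=Imgammaunionsin}), together with the naturality -- or the uniqueness part of Theorem~\ref{crosschar} -- of the cross-effect decomposition.
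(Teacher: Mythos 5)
Your proof is correct and follows essentially the same route as the paper's: identify $\Image s_i$ as the direct subsum of $\Gamma(C.)_n$ indexed by $(S^{n,k}_i)^C$ (the paper cites Theorem~\ref{bigGammathm}(a), you cite the underlying Proposition~\ref{sur=Imgammaunionsin}), apply Theorem~\ref{crossdecomposition} to both the numerator and this subsum, observe that compatibility of the two decompositions follows from the split-direct-summand structure, and translate the surviving condition into honourability via $\mu\in S^{n,k}_i \Leftrightarrow i\in\mu^\triangle$. Your remark flagging the nonlinearity issue and resolving it via the splitness of the inclusion is exactly the point the paper handles with the sentence ``$\ldots$ which is a subsum of the sum $\ldots$, so Theorem~\ref{crossdecomposition} tells us that $F(\Image s_i)\cong\Image F(s_i)$.''
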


\begin{proof}
Using the definitions of \(N\) and \(\Gamma\) we see that
\begin{align*}
NF\Gamma (C.)_n
=F\Big({\bigoplus ^{n} _{k=0} \bigoplus_{ \mu \in \Sur([n],[k])} C_k}\Big)\Bigg/\sum_{i=0}^{n-1}\Image F(s_i)
\text{.}
\end{align*}
Theorem~\ref{bigGammathm}(a) tells us that \(\Image s_i=\bigoplus
^{n} _{k=0} \bigoplus_{\mu \in (S^{n,k}_i)^C} C_k\) which is a
subsum of the sum \(\bigoplus ^{n} _{k=0} \bigoplus_{\mu \in
\Sur([n],[k])}C_k,\) so Theorem~\ref{crossdecomposition} tells us
that \(F(\Image s_i)\cong\Image F(s_i)\).  So we get
\begin{align*}
NF\Gamma (C.)_n
\cong F\Big({\bigoplus ^{n} _{k=0} \bigoplus_{ \mu \in \Sur([n],[k])} C_k}\Big)\Bigg/\sum_{i=0}^{n-1}F(\Image s_i)
\text{.}
\end{align*}
Expanding the numerator in terms of cross effects according to
Theorem~\ref{crossdecomposition} we get the formula
\[
F\Big({\bigoplus ^{n} _{k=0} \bigoplus_{ \mu \in \Sur([n],[k])}
C_k}\Big)= \bigoplus_{\alpha\subseteq\amalg_{k=0}^n\Sur([n],[k])}
\cross_{|\alpha|} (F)(C_{0,\alpha_0},\ldots,C_{n,\alpha_n})
\text{.}
\]
Now using Theorem~\ref{bigGammathm}(a) to give us an expression
for \(\Image(s_i)\) we expand the denominator in terms of cross
effects and we see that:
\[
F(\Image s_i) = F\Big(\bigoplus_{k=0}^n \bigoplus_{\mu\in\Sur
([n],[k])\setminus S^{n,k}_i} C_{k,\mu}\Big) =\bigoplus_{\alpha}
\cross_{|\alpha|} (F)(C_{0,\alpha_0},\ldots,C_{n,\alpha_n})
\text{,}
\]
where the last sum ranges over all subsets \(\alpha\subset
\amalg_{k=0}^n\Sur([n],[k])\) where \( i \not\in
\cup_{\mu\in\alpha}\, \mu^\triangle.\) From this we see that
\(\cross_{|\alpha|}(F)(C_{0,\alpha_0},\ldots,C_{n,\alpha_n})\) is
not a direct summand of \(\Image F(s_i)\) if and only if \( i \in
\cup_{\mu\in\alpha}\, \mu^\triangle.\) A module is a direct
summand of \(NF\Gamma(C.)_n\) if and only if it is not a direct
summand of \(\sum_{i=0}^{n-1} \Image F(s_i),\) and hence we see
the desired result.
\end{proof}

Although the expression for $NF\Gamma(C.)_n$ given in the previous
proposition is quite compact it still contains many vanishing
terms: whenever $|\alpha|$ is bigger than than the degree of $F$
or $\alpha_k$ is non-empty for $k$ bigger than the length of $C.$,
the term $\cross_{|\alpha|}(F)(C_{0,\alpha_0}, \ldots C_{n,
\alpha_n})$ vanishes. The rest of this section is devoted to the
problem of quickly finding those honourable subsets $\alpha$ for
which $\cross_{|\alpha|}(F)(C_{0,\alpha_0}, \ldots C_{n,
\alpha_n})$ does not vanish. A first (still rather rough) result
in this direction is Corollary~\ref{length of DP complex} below.
Later we will describe an algorithm that produces the relevant
honourable subsets fairly quickly.

\begin{prop}\label{honour inequality}
\begin{enumerate}[(a)]
\item Let \(\alpha\) be an honourable subset of
\(\amalg_{k=0}^{n}\Sur([n],[k])\). Then we have the inequality
\(\sum_{k=0}^nk|\alpha_k|\ge n\). \item Conversely let
\((a_0,\ldots,a_n)\in {\mathbb{N}_0}^{n+1}\) with
\(a_k\le\binom{n}{k}\) for each \(k\in\{0,\ldots,n\}\). If
\(\sum_{k=0}^nk a_k\ge n\) then there is some honourable subset
\(\alpha\) of \(\amalg_{k=0}^{n}\Sur([n],[k])\) with
\(|\alpha_k|=a_k\) for each \(k\in \{0,\ldots,n\}.\)
\end{enumerate}
\end{prop}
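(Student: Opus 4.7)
The plan is to pass to the bijection \(^\triangle\) of Definition~\ref{triangle}, which identifies each \(\mu\in\Sur([n],[k])\) with a \(k\)-subset \(\mu^\triangle\) of \(\{0,\ldots,n-1\}\). Under this translation, ``\(\alpha\) is honourable'' becomes ``the family \(\{\mu^\triangle:\mu\in\alpha\}\) of subsets of \(\{0,\ldots,n-1\}\) has union equal to \(\{0,\ldots,n-1\}\)'', so both parts of the proposition reduce to purely combinatorial statements about families of subsets.

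Part~(a) is then a one-line subadditivity argument: if \(\alpha\) is honourable, then
\(n=\bigl|\bigcup_{\mu\in\alpha}\mu^\triangle\bigr|\le\sum_{\mu\in\alpha}|\mu^\triangle|=\sum_{k=0}^n k\,|\alpha_k|\), since each \(\mu\in\alpha_k\) contributes a \(k\)-subset to the union.

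For part~(b), it suffices to construct a family \(\mathcal{F}\) of pairwise distinct subsets of \(\{0,\ldots,n-1\}\) containing exactly \(a_k\) subsets of size \(k\) for each \(k\), whose union is all of \(\{0,\ldots,n-1\}\); one then pulls back via \(^\triangle\) to get \(\alpha\). My plan has two stages. First, pick \emph{any} family \(\mathcal{F}\) with the required size distribution; this is possible precisely because \(a_k\le\binom{n}{k}\) gives enough distinct \(k\)-subsets to choose from. Second, improve \(\mathcal{F}\) by an iterative swap argument until its union covers all of \(\{0,\ldots,n-1\}\).

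The swap step runs as follows. Suppose some \(j\in\{0,\ldots,n-1\}\) is not covered by \(\mathcal{F}\). From \(\sum_{B\in\mathcal{F}}|B|=\sum_k k\,a_k\ge n > |\bigcup\mathcal{F}|\), a pigeonhole argument produces an element \(j'\) lying in at least two members of \(\mathcal{F}\). For each such \(B\in\mathcal{F}\) with \(j'\in B\), form the same-size replacement \(B':=(B\setminus\{j'\})\cup\{j\}\); if some such \(B'\) is not already in \(\mathcal{F}\), then swapping \(B\) for \(B'\) preserves distinctness and the size distribution, adds \(j\) to the union, and leaves \(j'\) covered by the remaining member(s) of \(\mathcal{F}\) containing it, so \(\bigl|\bigcup\mathcal{F}\bigr|\) strictly increases. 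The main obstacle, which I would resolve by contradiction, is showing that a valid swap is always available: if every candidate \(B'\) already lay in \(\mathcal{F}\), then one of them would be an element of \(\mathcal{F}\) containing \(j\), contradicting the uncoveredness of \(j\). Since \(\bigl|\bigcup\mathcal{F}\bigr|\) grows by at least one at each step, the procedure terminates after at most \(n\) swaps, and translating back through \(^\triangle\) yields the desired honourable \(\alpha\) with \(|\alpha_k|=a_k\) for every \(k\).
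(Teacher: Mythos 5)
Your proof is correct and follows essentially the same route as the paper: translate through the bijection $^\triangle$ to reduce both parts to a statement about covering $\{0,\ldots,n-1\}$ by pairwise distinct subsets with prescribed size multiplicities, and part (a) is the same one-line subadditivity count. For part (b) the paper merely asserts that such a covering exists, whereas your pigeonhole-and-swap argument actually constructs one, which is a welcome piece of extra rigour; note moreover that the swap is in fact unconditional --- since $j$ is uncovered, the candidate $(B\setminus\{j'\})\cup\{j\}$ contains $j$ and hence can never already lie in $\mathcal{F}$ --- so you needn't hedge with ``if some such $B'$ is not already in $\mathcal{F}$''.
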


\begin{proof}
Firstly we prove part~(a).  We know \(\alpha\) is honourable, so
by definition
\[
\cup_{k=0}^n\cup_{\mu\in\alpha_k} \, \mu^\triangle =
\{0,1,\ldots,n-1\}\text{.}
\]
Hence
\[\sum_{k=0}^n k|\alpha_k| =
\sum_{k=0}^n\sum_{\mu\in\alpha_k} |\mu^\triangle| \ge
|\{0,1,\ldots,n-1\}|=n.
\]

Now we prove part~(b). Because
\(|\{0,\ldots,n-1\}|=n\le\sum_{k=0}^nka_k\) and $a_k \le
\binom{n}{k}$ we can cover the set \(\{0,\ldots,n-1\}\) using
\(a_1\) subsets of cardinality~1, \(a_2\) subsets of
cardinality~2, \ldots, \(a_{n-1}\) subsets of cardinality~\(n-1\)
and \(a_n\) subsets of cardinality~\(n\). Take such a covering
\(\beta\) and define $\alpha$ to be the preimage of $\beta$ under
the map $^\triangle: \amalg_{k=0}^n \Sur([n],[k]) \rightarrow
\mathcal{P}_n$ introduced in Definition~\ref{triangle}. Then
$\alpha$ has the desired properties.
\end{proof}

\begin{cor}\label{length of DP complex}
The length of the Dold-Puppe complex \(NF\Gamma(C.)\) is less than
or equal to the product $ld$ of the length $l$ of \(C.\) and the
degree $d$ of \(F\). Equality is achieved if the module
\(\cross_{d}(F)(C_l,\ldots,C_l)\) is not the zero module.
\end{cor}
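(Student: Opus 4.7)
The plan is to exploit the direct-sum decomposition of $NF\Gamma(C.)_n$ given by Proposition~\ref{hon not degen}, and then combine the vanishing properties of cross-effect functors with the combinatorial inequality in Proposition~\ref{honour inequality}(a). Write $l$ for the length of $C.$ (so $C_k = 0$ for $k > l$) and $d$ for the degree of $F$ (so $\cross_m(F) = 0$ for $m > d$). For any honourable subset $\alpha \subseteq \amalg_{k=0}^n \Sur([n],[k])$, the summand $\cross_{|\alpha|}(F)(C_{0,\alpha_0}, \ldots, C_{n,\alpha_n})$ vanishes whenever $|\alpha| > d$ (by definition of the degree) or $|\alpha_k| > 0$ for some $k > l$ (since any zero argument kills a cross-effect module). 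So to bound the length of $NF\Gamma(C.)$ we only need to consider honourable $\alpha$ with $|\alpha| \le d$ and $\alpha_k = \emptyset$ for $k > l$.

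For the inequality $n \le ld$, I would apply Proposition~\ref{honour inequality}(a) to such an $\alpha$:
\[
n \le \sum_{k=0}^{n} k\,|\alpha_k| = \sum_{k=0}^{l} k\,|\alpha_k| \le l \sum_{k=0}^{l} |\alpha_k| = l\,|\alpha| \le ld.
\]
Hence every non-zero summand of $NF\Gamma(C.)_n$ forces $n \le ld$, proving the first assertion.

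For the equality statement, assume $\cross_d(F)(C_l, \ldots, C_l) \neq 0$ and set $n = ld$. The plan is to exhibit an honourable $\alpha \subseteq \amalg_{k=0}^{ld} \Sur([ld],[k])$ with $|\alpha_l| = d$ and $|\alpha_k| = 0$ for $k \ne l$, so that the corresponding summand in Proposition~\ref{hon not degen} is exactly $\cross_d(F)(C_l, \ldots, C_l)$, which is non-zero by hypothesis. To construct $\alpha$, I would invoke Proposition~\ref{honour inequality}(b) with $a_l = d$ and $a_k = 0$ otherwise: the hypothesis $\sum_k k a_k = ld \ge ld$ holds with equality, and the condition $d \le \binom{ld}{l}$ is easily verified (for instance, the $d$ consecutive blocks $\{0, \ldots, l-1\}, \{l, \ldots, 2l-1\}, \ldots, \{(d-1)l, \ldots, ld-1\}$ give $d$ pairwise disjoint subsets of $\{0,\ldots,ld-1\}$ of size $l$, each of the form $\mu^\triangle$ for a unique $\mu \in \Sur([ld],[l])$ under the bijection of Definition~\ref{triangle}). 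Their union is all of $\{0, \ldots, ld-1\}$, so the resulting $\alpha$ is honourable with the required profile.

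The only step requiring genuine care is the equality part: one must ensure that the desired profile $(|\alpha_k|)_k$ is actually realised by an honourable set, i.e.\ that the $d$ elements of $\alpha_l$ can be chosen so that their $\triangle$-images together cover $\{0, \ldots, ld-1\}$. This is the content of the explicit block covering above, and is the one non-formal input; everything else is bookkeeping via Proposition~\ref{hon not degen} and the vanishing properties of cross-effects.
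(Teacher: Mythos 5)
Your proposal is correct and follows essentially the same route as the paper: the upper bound comes from combining the decomposition of Proposition~\ref{hon not degen}, the two vanishing observations, and the inequality in Proposition~\ref{honour inequality}(a) (the paper phrases this as a contradiction for $n>ld$, you derive $n\le ld$ directly, but the chain of estimates is the same), while equality is obtained by invoking Proposition~\ref{honour inequality}(b) with $a_l=d$ and $a_k=0$ otherwise. Your explicit block covering $\{0,\ldots,l-1\},\{l,\ldots,2l-1\},\ldots$ is a concrete witness for what the paper leaves to Proposition~\ref{honour inequality}(b), but is not a departure from the paper's argument.
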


\begin{proof}
Proposition~\ref{hon not degen} tells us that
\[
NF\Gamma(C.)_n\cong
\bigoplus_{\alpha\subset\amalg_{k=0}^n\Sur([n],[k]),\text{\rm{\(\alpha\) is honourable}}}\cross_{|\alpha|}(F)(C_{0,\alpha_0},\ldots,C_{n,\alpha_n})\text{.}
\]
If \(|\alpha|>d\) then \(\cross_{|\alpha|}(F)(C_{0,\alpha_0},\ldots,C_{n,\alpha_n})\) vanishes.
Also the properties of cross-effects tell us if any of the modules are zero then cross-effect modules involving them will also vanish,
in particular any which involve any copies of \(C_{l'}\) where \(l'>l\) vanish.
So the only non-zero cross-effect modules in \(NF\Gamma(C.)_n\) are those
which correspond to subsets of \(\amalg_{k=0}^{\min\{n,l\}}\Sur([n],[k])\)
that are honourable and of cardinality \(d\) or less.

It therefore suffices to show that, if $n > ld$, there does not
exist any honourable subset $\alpha$ of
$\sum_{k=0}^{\min\{n,l\}}\Sur([n],[k])$ that satisfies $|\alpha|
\le d$. Suppose $\alpha$ is such a subset. As $|\alpha_k| = 0$ for
$k > \min\{n,l\} = l$ (we may assume $d \ge 1$) we obtain
\[\sum_{k=0}^n |\alpha_k| k = \sum_{k=0}^l |\alpha_k|k \le
\sum_{k=0}^l |\alpha_k|l = l |\alpha| \le ld < n. \] This
contradicts Proposition~\ref{honour inequality}(a).

To prove equality is achieved if \(\cross_d(F)(C_l,\ldots,C_l)\)
is not the zero module, we set \(n=dl,\) \(a_l=d\) and \(a_k=0\)
if \(k\ne l\). Proposition~\ref{honour inequality}(b) tells us
that there is some honourable set \(\alpha\subset
\amalg_{k=0}^{n}\Sur([n],[k])\) with \(|\alpha_k|=a_k\) for each
\(k\in\{0,\ldots,n\}\). This condition tells us that
\(\alpha\subset \Sur([n],[l])\). So
\(\cross_{|\alpha|}(F)(C_{0,\alpha_0},\ldots,C_{n,\alpha_n})=\cross_{d}(F)(C_{l,\alpha_l})\).
This is non-zero by assumption and a direct summand of
\(NF\Gamma(C.)_n\) because of our choice of \(\alpha\).
\end{proof}

The following definition will be useful in describing the
algorithm mentioned above.

\begin{defn}\label{superfluous}
(a) We define a total order on the powerset $\mathcal{P}_n$ of
$\{0, 1, \ldots, n-1\}$ as follows. Let $x=\{i_1 < \ldots < i_k\}$
and $y = \{j_1 < \ldots < j_{k'}\}$ be sets in $\mathcal{P}_n$.
Then $x\le y$ if and only if $k' < k$ or ($k'=k$ and $(i_1,
\ldots, i_k)\le (j_1, \ldots, j_k)$ in the lexicographic
ordering). \\
(b) Let $T$ be a subset of $\mathcal{P}_n$ and let $x$ be a set in
$T$. We say that $x$ is {\em superfluous} in $T$ if $\cup_{y \in
T}\,y = \cup_{y\in
T\setminus \{x\}}\,y$.\\
(c) We say that an honourable subset $\alpha$ of $\amalg_{k=0}^n
\Sur([n],[k])$ is {\em minimal} if $\alpha^\triangle$ does not
contain any superfluous sets.
\end{defn}

Recall that we have introduced a total order on $\Sur([n],[k])$ in
Definition~\ref{order} for each $k \in \{0,1, \ldots, n\}$. It is
easy to see that the bijection $^\triangle:\Sur([n],[k])
\rightarrow \mathcal{P}_n^k$ is order preserving. The following
easy procedure is an efficient way for checking whether a subset
$T$ of $\mathcal{P}_n$ contains superfluous sets, particularly in
the context of the algorithm described later.

\begin{proc}\label{procedure}
Let $T$ be a subset of $\mathcal{P}_n$. We first order the sets in
$T$ using the ordering introduced in
Definition~\ref{superfluous}(a), say $T = \{x_1 < \ldots < x_m\}$.
For each $r=2, \ldots, m$ and for each $i \in x_r$ we then check
whether $i \in x_1 \cup \ldots \cup x_{r-1}$. If so, we underline
$i$ in each of the sets $x_1, \ldots, x_r$ where it occurs. There
are two ways for this procedure to stop: (1) we perform the check
(and if necessary the underlining) described above for each
\(r\in\{2,\ldots,m\}\) and each \(i\in x_r\) and at each stage we
find that no set in $T$ has all of its elements underlined; (2) at
some point we find some set $x$ in $T$ with each of its elements
underlined. In case~(1) no superfluous sets are contained in $T$;
in case~(2) the set $x$ is superfluous in $T$.

\end{proc}

\begin{example} Let $n=4$.\\
(a) Applying Procedure~\ref{procedure} to $T= \{\{0\}, \{0,3\},
\{0,1\}\}$ we first obtain $\{\underline{0},1\} <
\{\underline{0},3\}$ and then $\{\underline{0},1\} <
\{\underline{0}, 3\} < \{\underline{0}\}$. Hence the last set
$\{\underline{0}\}$ is superfluous. \\
(b) Applying Procedure~\ref{procedure} to $T = \{\{0,1\}, \{1,2\},
\{2,3\}\}$ we first obtain $\{0, \underline{1}\} <
\{\underline{1},2\}$ and then $\{0, \underline{1}\} <
\{\underline{1}, \underline{2}\} < \{\underline{2},3\}$. Hence the
second set $\{1,2\}$ is superfluous. \\
(c) Applying Procedure~\ref{procedure} to $T=\{\{0,1,2\},
\{1,3\}\}$ we obtain $\{0, \underline{1}, 2\} < \{1, 3\}$. Hence
none of the
sets in $T$ is superfluous. \\
(d) Procedure~\ref{procedure} applied to $T=\{\{0,1\}, \{1,2\},
\{1\}, \{2\}, \{3\}\}$ stops at $\{0, \underline{1}\} <
\{\underline{1},2\} < \{\underline{1}\}$.
\end{example}

We now describe an algorithm which finds all {\em minimal}
honourable subsets of the set $\amalg_{k=0}^n \Sur([n],[k])$ in an
efficient way. Via the bijection $^\triangle: \amalg_{k=0}^n
\Sur([n],[k]) \rightarrow \mathcal{P}_n$ (see
Definition~\ref{triangle}) this amounts to finding all subsets $T$
of $\mathcal{P}_n$ such that $\cup_{x \in T}\, x = \{0, 1, \ldots,
n-1\}$ and such that $T$ does not contain any superfluous sets. We
below first inductively define a finite list $T_1, T_2, \ldots$ of
subsets of $\mathcal{P}_n$. From the construction it will be
immediately clear that $T_1, T_2, \ldots$ is the list of all
subsets of $\mathcal{P}_n$ which do not contain any superfluous
sets. We finally just discard those subsets from the list which
are not honourable.

\begin{defn}\label{list}
We inductively define a finite list $T_1, T_2, \ldots $ of subsets
of $\mathcal{P}_n$ containing no superfluous sets as follows. Let
$T_1 := \{\{0, 1, \ldots, n-1\}\}$ and suppose $T_1, \ldots, T_m$
have already been defined. We write $T_m$ in the form $\{x_1 <
\ldots <x_r\}$ with some sets $x_1, \ldots, x_r$ in
$\mathcal{P}_n$. If $r=1$ and $x_1 =\{n-1\}$, i.e.\ if $x_1$ is
the maximal set in $\mathcal{P}_n\setminus \{\emptyset\}$, the
list $T_1, \ldots, T_m$ is complete. We now assume this is not the
case. If $T_m$ is not honourable then (since by construction $T_m$
contains no superfluous set) there exists a set $y$ in
$\mathcal{P}_n$ bigger than $x_r$ such that $\{x_1 < \ldots < x_r
< y\}$ does not contain any superfluous set; we choose $y$ to be
minimal with this property and define $T_{m+1}:= \{x_1 < \ldots <
x_r < y\}$. If $T_m$ is honourable there exists an index $s \in
\{1, \ldots, r\}$ and a set $y$ in $\mathcal{P}_n$ bigger than
$x_{s-1}$ such that $\{x_1 < \ldots < x_{s-1} < y\}$ does not
contain any superfluous set. We choose $s \in \{1, \ldots, r\}$ to
be maximal and $y \in \mathcal{P}_n$ to be minimal with this
property and define $T_{m+1}:=\{x_1 < \ldots < x_{s-1} < y\}$.
\end{defn}

\begin{example}\label{list3}
For $n=3$ the previous definition gives the following list $T_1,
T_2, \ldots$ of subsets of $\mathcal{P}_3$. Following the
convention introduced in Procedure~\ref{procedure} we underline
certain elements to be able to easily detect superfluous sets.
\\
$T_1= \{\{0,1,2\}\}$, $T_2 = \{\{0,1\}\}$, $T_3 =
\{\{\underline{0},1\} < \{\underline{0},2\}\}$, $T_4 = \{\{0,
\underline{1}\} < \{\underline{1}, 2\}\}$, $T_5 = \{\{0, 1\} <
\{2\}\}$, $T_6 = \{\{0,2\}\}$, $T_7 = \{\{0, \underline{2}\} <
\{1, \underline{2}\}\}$, $T_8 = \{\{0,2\} < \{1\}\}$, $T_9 =
\{\{1,2\}\}$, $T_{10} = \{\{1,2\} < \{0\}\}$, $T_{11} =
\{\{0\}\}$, $T_{12} = \{\{0\} < \{1\}\}$, $T_{13} = \{\{0\} <
\{1\} < \{2\}\}$, $T_{14} = \{\{0\} < \{2\}\}$, $T_{15} =
\{\{1\}\}$, $T_{16} = \{\{1\} < \{2\}\}$, $T_{17} = \{\{2\}\}$. \\
The subsets $T_1$, $T_3$, $T_4$, $T_5$, $T_7$, $T_8$, $T_{10}$ and
$T_{13}$ correspond to minimal honourable subsets of
$\amalg_{k=0}^3 \Sur([n],[k])$.
\end{example}

As explained earlier, in order to calculate the direct-sum
decomposition in Proposition~\ref{hon not degen} there is no need
to find those honourable subsets $\alpha$ of $\amalg_{k=0}^n
\Sur([n],[k])$ for which $\alpha_k$ is non-empty for $k$ bigger
than the length $l$ of $C.$. In other words, rather than starting
the inductive procedure in Definition~\ref{list} at the smallest
set $\{0,1, \ldots, n-1\}$ in $\mathcal{P}_n$ it suffices to begin
at $\{0, 1, \ldots, \min\{n,l\}-1\}$.

\begin{example}\label{list4}
In this example we apply Definition~\ref{list} in the case $n=4$.
We begin the induction only at $\{\{0,1\}\}$ rather than at
$T_1=\{\{0,1,2,3\}\}$, i.e.\ we assume $l=2$. For simplicity, we
omit the external brackets for $T_i$, we in fact omit the name
$T_i$ as well (but  keep the order of the list of course) and we
moreover write down only subsets of $\mathcal{P}_n$ which
correspond to
minimal {\em honourable} subsets. The result is as follows.\\
$\{0,1\} < \{0,2\}< \{0,3\}$, $\{0,1\} < \{0,2\} < \{3\}$,
$\{0,1\} < \{0, 3\} < \{2\}$, $\{0, 1\} < \{1,2\} < \{1,3\}$,
$\{0,1\} < \{1,2\} < \{3\}$, $\{0,1\} < \{1,3\} < \{2\}$, $\{0,1\}
< \{2, 3\}$, $\{0,1\} <\{2\}<\{3\}$, $\{0,2\}<\{0,3\}<\{1\}$,
$\{0,2\}<\{1,2\}<\{2,3\}$, $\{0,2\}<\{1,2\}<\{3\}$,
$\{0,2\}<\{1,3\}$, $\{0,2\} < \{2,3\}<\{1\}$,
$\{0,2\}<\{1\}<\{3\}$, $\{0,3\}< \{1,2\}$, $\{0,3\} < \{1,3\} <
\{2,3\}$, $\{0,3\} < \{1,3\} < \{2\}$, $\{0,3\} < \{2,3\} <
\{1\}$, $\{0,3\} < \{1\} <\{2\}$, $\{1,2\} < \{1,3\} < \{0\}$,
$\{1,2\} < \{2,3\} < \{0\}$, $\{1,2\} < \{0\} < \{3\}$, $\{1,3\} <
\{2,3\} < \{0\}$, $\{1,3\} < \{0\} < \{2\}$, $\{2,3\} < \{0\} <
\{1\}$, $\{0\} < \{1\} < \{2\} < \{3\}$.
\end{example}

The object of the following example is to illustrate the methods
developed earlier in this paper.

\begin{example}\label{final}
Let $R$ be a commutative ring and let $C
\,\,\stackrel{\partial}{\rightarrow} \,\, B \,\,
\stackrel{\partial}{\rightarrow}\,\, A$ be a chain complex of
$R$-modules of length~2 (sitting in degrees 0, 1 and 2). The goal
of this example is to explicitly write down the Dold-Pupppe
complex $Q.:= N \Sym^2\Gamma(C \rightarrow B \rightarrow A)$. We
proceed in two steps. In the first step we write down the object
$Q_n$ for $n=0,1, \ldots$ (using the method developed in this
section) and in the second step we write down the differential
$\Delta: Q_n \rightarrow Q_{n-1}$ for $n=1,2, \ldots$ (using the
calculations made at the end of Section~2).

\noindent By Corollary~\ref{length of DP complex} the chain
complex $Q.$ is of length~4. From Proposition~\ref{hon not degen}
we immediately get $D_0 = \Sym^2(A)$. To calculate $D_n$ for
$n=1,2,3,4$ we first find all honourable subsets of
$\amalg_{k=0}^n \Sur([n],[k])$. The subsets of $\mathcal{P}_n$
listed below correspond to minimal honourable subsets of
$\amalg_{k=0}^n \Sur([n],[k])$. As explained earlier before
Example~\ref{list4} we write down only those subsets $T$ of
$\mathcal{P}_n$ whose sets contain at most 2 elements. We
furthermore write down only those subsets $T$ of $\mathcal{P}_n$
which contain at most 2 sets (because the degree of $\Sym^2$ is
2). As in Example~\ref{list4} we omit the exterior brackets. For
$n=3$ and $n=4$ we use Examples~\ref{list3} and \ref{list4},
respectively.

\noindent$n=1: \quad \{0\}$\\
$n=2: \quad \{0,1\}$, $\{0\} < \{1\}$\\
$n=3: \quad \{0,1\} < \{0,2\}$, $\{0,1\} < \{1,2\}$, $\{0,1\} <
\{2\}$,
$\{0,2\} < \{1,2\}$, \\
$\phantom{n=3: \quad ,}$ $\{0,2\}<\{1\}$, $\{1,2\}<\{0\}$\\
$n=4: \quad \{0,1\} < \{2,3\}$, $\{0,2\}<\{1,3\}$, $\{0,3\} <
\{1,2\}$

\noindent We finally add to these lists those subsets $T$ of
$\mathcal{P}_n$ which correspond to non-minimal honourable
subsets. As above we are only interested in subsets $T$ of
$\mathcal{P}_n$ of cardinality at most~2. Hence the lists for
$n=3$ and $n=4$ do not change. For $n=1$ and $n=2$ the completed
lists are as follows.

\noindent$n=1: \quad \{0\}$, $\{0\} < \emptyset$\\
$n=2: \quad \{0,1\}$, $\{0, 1\} < \{0\}$, $\{0,1\} < \{1\}$,
$\{0,1\} < \emptyset$, $\{0\}<\{1\}$

\noindent(By the way, this also illustrates that it is more
efficient to first find the minimal honourable subsets and then to
add the relevant non-minimal hounourable subsets than to
immediately go for all
honourable subsets.)\\
Hence the objects $Q_0, \ldots, Q_4$ are as follows.

\noindent$Q_0= \Sym^2(A)$\\
$Q_1= \Sym^2(B_1) \oplus B_1 \otimes A$\\
$Q_2= \Sym^2(C_1) \oplus C_1 \otimes B_1 \oplus C_1 \otimes B_2
\oplus C_1 \otimes A \oplus B_1 \otimes B_2$\\
$Q_3= C_1 \otimes C_2 \oplus C_1 \otimes C_3 \oplus C_1 \otimes
B_3 \oplus C_2 \otimes C_3 \oplus C_2 \otimes B_2 \oplus C_3
\otimes B_1$\\
$Q_4 = C_1 \otimes C_6 \oplus C_2 \otimes C_5 \oplus C_3 \otimes
C_4$

Here, for instance the module $C_5$ in $Q_4$ refers to the
$5^{\textrm{th}}$ copy of the module $C$ in $\Gamma(C \rightarrow
B \rightarrow A)_4 = C^6 \oplus B^4 \oplus A$, using the ordering
of copies of $C$ introduced in Section~2. \\
We finally turn to the differential $\Delta:Q_n \rightarrow
Q_{n-1}$ for $n=1,2,3,4$. It is induced by $\sum_{i=0}^n (-1)^i
d_i$ (see Section~3). Here, $d_i$ denotes the $i^\textrm{th}$ face
operator in $\Sym^2 \Gamma (C \rightarrow B \rightarrow A)$; i.e.\
$d_i$ is the symmetric square of the $i^\textrm{th}$ face operator
in $\Gamma(C \rightarrow B \rightarrow A)$. Using the calculation
given in Example~\ref{face and degeneracy operator calculation
example} and some elementary facts about the cross-effects of
$\Sym^2$ we obtain the following action of $d_i$ on each direct
summand of $Q_n$ for $n=1,2,3,4$.

\noindent$\begin{array}{lll}
n=1: & d_0:& \Sym^2(B_1) \rightarrow
\Sym^2 (A), \quad   bb' \mapsto
\partial(b) \partial(b')\\
&& B_1 \otimes A \rightarrow \Sym^2(A), \quad b\otimes a \mapsto
\partial (b) a\\
& d_1: & \textrm{acts as the zero map on } Q_1\\
\end{array}$

\noindent$\begin{array}{lll}
n=2:& d_0:& \Sym^2(C_1) \rightarrow
\Sym^2(B_1), \quad cc' \mapsto
\partial(c)\partial(c')\\
&& C_1 \otimes B_1 \rightarrow B_1 \otimes A, \quad c \otimes b
\mapsto \partial(c) \otimes \partial(b)\\
&& C_1 \otimes B_2 \rightarrow \Sym^2(B_1), \quad c \otimes b
\mapsto
\partial(c) b\\
&& C_1 \otimes A \rightarrow B_1\otimes A, \quad c\otimes a
\mapsto
\partial(c) \otimes a\\
&& B_1 \otimes B_2 \rightarrow B_1 \otimes A, \quad b \otimes b'
\mapsto b' \otimes \partial (b)\\
& d_1: & \textrm{acts as the zero map on the first four direct summands of } Q_2\\
&& B_1 \otimes B_2 \rightarrow \Sym^2(B_1),\quad b\otimes b'
\mapsto
bb'\\
& d_2: &\textrm{acts as the zero map on } Q_2\\
\end{array}$

\noindent$\begin{array}{lll}
n=3:& d_0:& C_1 \otimes C_2
\rightarrow B_1 \otimes B_2, \quad c
\otimes c' \mapsto \partial(c) \otimes \partial(c')\\
&& C_1 \otimes C_3 \rightarrow C_1 \otimes B_1, \quad c \otimes c'
\mapsto c' \otimes \partial(c)\\
&& C_1 \otimes B_3 \rightarrow B_1 \otimes B_2, \quad c \otimes b
\mapsto \partial(c) \otimes b\\
&& C_2 \otimes C_3 \rightarrow C_1 \otimes B_2, \quad c \otimes c'
\mapsto c' \otimes \partial(c)\\
&& C_2 \otimes B_2 \rightarrow B_1 \otimes B_2, \quad c \otimes b
\mapsto b \otimes \partial(c)\\
&& C_3 \otimes B_1 \rightarrow C_1 \otimes A, \quad c \otimes b
\mapsto c \otimes \partial(b)\\
& d_1: & \textrm{acts as the zero map on the first three direct
summands of } Q_3\\
&&C_2 \otimes C_3 \rightarrow \Sym^2(C_1), \quad c \otimes c'
\mapsto
cc'\\
&& C_2 \otimes B_2 \rightarrow C_1 \otimes B_1, \quad c \otimes b
\mapsto c \otimes b\\
&& C_3 \otimes B_1 \rightarrow C_1 \otimes B_1, \quad c \otimes b
\mapsto c \otimes b\\
&d_2:& \textrm{acts as the zero map on the second, fourth and
sixth direct summand of } Q_3\\
&& C_1 \otimes C_2 \rightarrow \Sym^2(C_1), \quad c \otimes c'
\mapsto cc'\\
&& C_1 \otimes B_3 \rightarrow C_1 \otimes B_2, \quad c \otimes b
\mapsto c \otimes b\\
&& C_2 \otimes B_2 \rightarrow C_1 \otimes B_2, \quad c \otimes b
\mapsto c\otimes b\\
& d_3: &\textrm{acts as the zero map on } Q_3\\
\end{array}$

\noindent$\begin{array}{lll}
n=4:& d_0:& C_1 \otimes C_6
\rightarrow C_3 \otimes B_1, \quad c
\otimes c' \mapsto c' \otimes \partial(c)\\
&& C_2 \otimes C_5 \rightarrow C_2 \otimes B_2, \quad c \otimes c'
\mapsto c' \otimes \partial(c)\\
&& C_3 \otimes C_4 \rightarrow C_1 \otimes B_3, \quad c \otimes c'
\mapsto c' \otimes \partial(c)\\
& d_1:& \textrm{acts as the zero map on the first direct summand
of } Q_4\\
&& C_2 \otimes C_5 \rightarrow C_1 \otimes C_2, \quad c \otimes c'
\mapsto c\otimes c'\\
&& C_3 \otimes C_4 \rightarrow C_1 \otimes C_2, \quad c \otimes c'
\mapsto c' \otimes c\\
&d_2:& \textrm{acts as the zero map on the last direct summand of
} Q_4\\
&& C_1 \otimes C_6 \rightarrow C_1 \otimes C_3, \quad c \otimes c'
\mapsto c\otimes c'\\
&& C_2 \otimes C_5 \rightarrow C_1 \otimes C_3, \quad c \otimes c'
\mapsto c \otimes c'\\
& d_3:& \textrm{acts as the zero map on the first direct summand
of } Q_4\\
&& C_2 \otimes C_5 \rightarrow C_2 \otimes C_3, \quad c \otimes c'
\mapsto c \otimes c'\\
&& C_3 \otimes C_4 \rightarrow C_2 \otimes C_3, \quad c \otimes c'
\mapsto c \otimes c'\\
& d_4: & \textrm{acts as the zero map on } Q_4
\end{array}$
\end{example}

\end{document}